\definecolor{darkgreen}{rgb}{0.0, 0.5, 0.0}
\newcommand{\eps}{\varepsilon}
\newcommand{\down}{\text{down}}
\newcommand{\out}{\text{out}}
\newcommand{\dif}{\text{diff}}
\newcommand{\red}{\text{red}}
\newcommand{\vol}{\text{vol}}
\newcommand{\upp}{\text{up}}
\theoremstyle{definition}
\newtheorem{thm}{Theorem}
\newtheorem{defn}{Definition}
\newtheorem{prop}[defn]{Proposition}
\newtheorem{lem}[defn]{Lemma}
\tikzstyle{every node}=[circle, draw, fill=black!50, inner sep=0pt, minimum width=4pt]
\tikzstyle{white}=[circle, draw, fill=black!0, inner sep=0pt, minimum width=4pt]
\tikzstyle{bigwhite}=[circle, draw, fill=black!0, inner sep=0pt, minimum width=10pt]
\tikzstyle{dual}=[circle, draw=blue, fill=black!0, inner sep=0pt, minimum width=4pt]
\tikzstyle{fat}=[circle, draw, fill=red!50, inner sep=0pt, minimum width=8pt]
\tikzstyle{fat_bis}=[circle, draw, fill=blue!50, inner sep=0pt, minimum width=8pt]
\tikzstyle{fat_ter}=[circle, draw, fill=green!50, inner sep=0pt, minimum width=8pt]
\tikzstyle{rouge}=[circle, draw, fill=red, inner sep=0pt, minimum width=7pt]
\tikzstyle{bleu}=[circle, draw, fill=blue, inner sep=0pt, minimum width=7pt]
\tikzstyle{petitrouge}=[circle, draw, fill=red, inner sep=0pt, minimum width=4pt]
\tikzstyle{petitbleu}=[circle, draw, fill=blue, inner sep=0pt, minimum width=4pt]
\tikzstyle{texte}=[draw=none, fill=none]
\title{\bf{Finding large expanders in graphs: from topological minors to induced subgraphs}}
\author{Baptiste Louf\thanks{Uppsala Universitet.}  \and Fiona Skerman\footnotemark[1]}
\begin{document}

\maketitle

\paragraph{Abstract.}  
In this paper, we consider {a structural and  geometric property} of graphs, namely the presence of large expanders. {The problem of finding such structures} was first considered by~Krivelevich~[SIAM J.\ Disc.\ Math.\ 32 1 (2018)]. Here, we show that the problem of finding a large induced subgraph that is an expander can be reduced to the simpler problem of finding a topological minor that is an expander. Our proof is constructive, which is helpful in an algorithmic setting.

We also show that every large subgraph of an expander graph contains a large subgraph which is itself an expander. 

\section{Introduction}
\setlength{\parindent}{0pt}

Expander graphs are important objects in graph theory. They have a wide variety of properties: for instance, the random walk mixes very fast~\cite{AKS87,Gil98}, and the eigenvalues of their Laplacian are well-separated~\cite{Al86, Che70}. In computer science, they are used for clustering with the expander decomposition technique (see for instance~\cite{KVV04}). We refer to~\cite{HLW06} for an extensive survey of expander graphs and their applications.\\ 

Unfortunately, the conditions for a graph to be an expander are quite restrictive, as the edge expansion of any subset of vertices should be high enough. In particular, the Cheeger constant of a graph is influenced by every "microscopic" part of the graph: for example adding a disjoint edge causes the Cheeger constant to drop to zero. There is a much larger set of graphs that shares macroscopic features with expanders and is more robust to small perturbations in the edge set. A natural such set is all graphs which contain linear sized expanders as induced subgraphs. Here, by large, we mean containing a constant fraction of the edges of the graph. The problem of finding linear sized expander subgraphs has already been considered in~\cite{Kri18}, where a sufficient local sparsity condition was given. This follows previous works on "weak" expander subgraphs in which the expansion factor is allowed to depend on the size of the graph~\cite{Mon15,SS15}.\\

However, finding an induced expander subgraph can be a difficult endeavour, as the only operation that is permitted is to delete a vertex and all its adjacent edges. Our main result is that this problem is actually equivalent to finding a topological minor that is an expander, which allows for more freedom in the operations to be performed. More precisely, the following theorem holds.

\needspace{12\baselineskip}
\begin{thm}\label{thm_topminor_to_induced}
For all $\kappa, \alpha>0$, and for all $0<\alpha'<\alpha$, there exists a $\kappa'>0$ such that the following holds for every (multi)graph $G$.

If there exists a graph $H$ satisfying the following conditions:
\begin{itemize}
\item $e(H)\geq \alpha e(G)$,
\item $H$ is a topological minor of $G$,
\item $H$ is a $\kappa$-expander,
\end{itemize}
then there exists a graph $H^*$ satisfying the following conditions:
\begin{itemize}
\item $e(H^*)\geq \alpha' e(G)$,
\item $H^*$ is an induced subgraph of $G$,
\item $H^*$ is a $\kappa'$-expander.
\end{itemize}
\end{thm}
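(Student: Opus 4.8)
The plan is to start from the topological minor $H$ of $G$ and gradually transform it into an induced subgraph while preserving, up to constant factors, both its size and its expansion. Recall that $H$ being a topological minor of $G$ means there is a subdivision $H'$ of $H$ sitting inside $G$ as a subgraph: the vertices of $H$ correspond to distinct \emph{branch vertices} of $H'$, and the edges of $H$ correspond to internally disjoint paths in $G$ between branch vertices. So I have three things to fix: (i) $H'$ is a subdivision, not $H$ itself, so long paths must be contracted back or otherwise dealt with; (ii) $H'$ is a subgraph of $G$, not an induced subgraph, so there may be chords of $G$ among the vertices of $H'$; and (iii) after any modification I must check the resulting graph is still an expander and still has $\gtrsim \alpha' e(G)$ edges.

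**Step 1: controlling the subdivision paths.**

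The first issue is that subdivision paths can be arbitrarily long, which both dilutes the edge count (a long path has low expansion "per edge") and makes the graph not induced. I would split the edges of $H$ into \emph{short} edges, whose path in $H'$ has length at most some constant $L = L(\kappa,\alpha,\alpha')$, and \emph{long} edges. Since $H$ is a $\kappa$-expander it has bounded average degree (an expander on $n$ vertices has $O(n)$ edges — actually $|E(H)| \le \binom{|V(H)|}{2}$ trivially but more importantly a $\kappa$-expander has minimum degree $\ge \kappa$, and one can bound things via the isoperimetry), so by a counting/averaging argument only a small fraction of the edge-mass of $H'$ can lie on long paths if $L$ is large enough relative to the ratio $\alpha/\alpha'$; I would simply delete the long paths (and the branch vertices that become isolated or low-degree), arguing that removing a $(1-\alpha'/\alpha)$-fraction of a $\kappa$-expander's edges in a structured way still leaves a $\kappa''$-expander on a linear-sized induced-ish piece. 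This is essentially the content of the second advertised result ("every large subgraph of an expander contains a large expander subgraph"), which I would prove as a lemma and invoke here. After this step I have a bounded-length subdivision $H''$ of a large expander subgraph of $H$.

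**Step 2: from bounded subdivision to induced subgraph, and handling chords.**

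Now $H''$ has all subdivision paths of length $\le L$, so $|V(H'')| \le L \cdot |V(H)|$ and $e(H'') \ge e(H) \ge \alpha e(G)$ still. A bounded subdivision of a $\kappa$-expander is a $\kappa/\mathrm{poly}(L)$-expander — this is a standard and easy fact (subdividing each edge at most $L$ times changes the Cheeger constant by at most a factor depending only on $L$). The remaining problem is that $H''$ sits inside $G$ but $G$ may have extra edges among $V(H'')$. Let $H^* = G[V(H'')]$ be the induced subgraph; then $H^*$ contains $H''$ as a spanning subgraph, so $e(H^*) \ge e(H'') \ge \alpha e(G) \ge \alpha' e(G)$ and crucially $H^*$ has \emph{more} edges than $H''$. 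Adding edges to an expander keeps it an expander with the \emph{same} constant (the boundary of any set only grows, while the volume normalization — if using vertex-volume — stays put; if edge-volume is used one needs a mild degree bound, which holds since $V(H^*) \subseteq V(G)$ gives $e(H^*) \le e(G) \le (1/\alpha) e(H'')$, keeping average degree within a constant factor). Hence $H^*$ is a $\kappa'$-expander with $\kappa' = \kappa'(\kappa,\alpha,\alpha')$, and it is an induced subgraph of $G$ with $e(H^*) \ge \alpha' e(G)$, as required.

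**Main obstacle.**

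The delicate point is Step 1: deleting long subdivision paths from $H'$ does not correspond to deleting vertices from $H$ in a clean way, so I cannot directly say "a large subgraph of the expander $H$". Rather, I must argue at the level of $H'$ (or $H$) that after removing the edge-set carried by long paths, what remains still \emph{contains} a large expander — this requires the robustness lemma (large subgraphs of expanders contain large expander subgraphs) applied to $H$ with the long edges removed, together with care that the pieces are reconnected, not shattered. Getting the quantitative dependence of $L$ and $\kappa'$ on $\kappa, \alpha, \alpha'$ right, and making sure the "linear fraction of edges" is measured against $e(G)$ and not $e(H)$ throughout the iteration, is where the real work lies; the subdivision-expansion comparisons in Step 2 are routine.
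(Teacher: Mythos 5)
Your Step 1 is roughly on track: after pruning long subdivision paths, applying the robustness result (Theorem~\ref{thm_big_subgraph_contains_expander}) to the edge-deleted copy of $H$, and invoking the fact that a bounded-length subdivision of a $\kappa$-expander is still an expander (Lemma~\ref{lem_smoothing_kappa}), you can indeed arrive at a large expander \emph{subgraph} of $G$ with bounded subdivision lengths. This parallels the paper's Proposition~\ref{prop_topminor_to_subgraph}, though the paper is more careful than ``delete long paths and invoke the lemma'': it passes to the largest connected component of the pruned graph (Proposition~\ref{prop_topminor_find_big_component}) and uses a lifting lemma (Lemma~\ref{lem_lifting}) so that the ``short-paths'' property is actually inherited by the object Theorem~\ref{thm_big_subgraph_contains_expander} produces.

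The real gap is in Step 2, where you pass from the subgraph $H''$ to the induced subgraph $H^*=G[V(H'')]$ and claim that the chords of $G$ are harmless. The claim ``adding edges to an expander keeps it an expander with the same constant'' is false under the edge-volume normalization $\vol(X)=\sum_{v\in X}\deg(v)$ that the paper uses, and the fix you propose---that $e(H^*)\leq(1/\alpha)e(H'')$ keeps the \emph{average} degree bounded---does not suffice, because the Cheeger constant is sensitive to \emph{individual} vertex degrees. Concretely, let $H''$ be a $3$-regular expander on $n$ vertices and let $G$ add a clique on some $S\subset V(H'')$ with $|S|\approx\sqrt{n}$: then $e(H^*)\leq 2e(H'')$, so the average degree only doubles, but $\vol_{H^*}(S)\approx n$ while $e_{H^*}(S,\overline S)\leq 3\sqrt{n}$, giving $h_{H^*}(S)=O(1/\sqrt{n})$. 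So $H^*$ is not a $\kappa'$-expander for any fixed $\kappa'$. What you actually need is a bound of the form $\deg_{H^*}(v)\leq M\deg_{H''}(v)$ for \emph{every} vertex $v$ (this is exactly Lemma~\ref{lem_deleting_edge_kappa}), and this does not come for free. This is precisely why the paper's Proposition~\ref{prop_subgraph_to_induced} is not a one-line ``take the induced subgraph'' step: Proposition~\ref{prop_algo_induced} runs an iterative process that alternately removes sets whose degree blows up (via Lemma~\ref{lem_few_bad_vertices}) and sets with low reduced expansion, and only after controlling the total volume removed does one get an induced subgraph to which Lemma~\ref{lem_deleting_edge_kappa} applies. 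Without something of this form, Step 2 as written does not close.
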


The proof is constructive, which implies that if we are given an algorithm to construct a topological minor of a graph that is an expander, we have an algorithm to construct an induced subgraph of that graph that is an expander. This result can be helpful to study some models of graphs where the topology is an important feature, such as \emph{combinatorial maps}. To illustrate this, Theorem~\ref{thm_topminor_to_induced} will be applied in an upcoming article by the first author~\cite{Lou21} concerning large expanders in uniform unicellular maps of high genus.\\

The proof of the main  theorem strongly relies on the following theorem of independent interest, that states that every large subgraph of an expander contains a large subgraph that is itself an expander.

\begin{thm}\label{thm_big_subgraph_contains_expander}
Take $0<\kappa\leq 1$ and $0<\eps<\frac{\kappa}{6}$, and $G$  a $\kappa$-expander. Then for any subgraph $H$ of $G$ satisfying $e(H)\geq (1-\eps) e(G)$, there exists an induced subgraph $H^*$ of $H$ satisfying
\begin{itemize}
\item $e(H^*)\geq \left(1-\frac{6\eps}{\kappa} \right)e(G)$,
\item $H^*$ is a $\frac{\kappa}{3}$-expander.
\end{itemize}
\end{thm}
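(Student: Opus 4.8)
My plan is to prove this by a \emph{peeling process} on $H$: repeatedly delete a vertex set witnessing a sparse cut until the remaining induced subgraph is a $\frac{\kappa}{3}$-expander, and then bound the number of deleted edges by charging them against the (few) edges that $H$ is missing from $G$. I may assume $V(H)=V(G)$ (adding isolated vertices changes none of the hypotheses), and I set $M=E(G)\setminus E(H)$, so $|M|\le \eps\, e(G)$.

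Concretely, I would build a decreasing chain $V(G)=W_0\supseteq W_1\supseteq\cdots$: given $W_{i-1}$, if $H[W_{i-1}]$ is a $\frac{\kappa}{3}$-expander I stop and set $W_T=W_{i-1}$, $H^*=H[W_T]$; otherwise I pick a cut $(A,W_{i-1}\setminus A)$ of $H[W_{i-1}]$ of size less than $\frac{\kappa}{3}\min(\vol_{H[W_{i-1}]}(A),\vol_{H[W_{i-1}]}(W_{i-1}\setminus A))$, let $S_i$ be whichever of the two sides has the smaller value of $\vol_G$, and set $W_i=W_{i-1}\setminus S_i$. This terminates after at most $|V(G)|$ steps, and $H^*$ is a $\frac{\kappa}{3}$-expander by construction (if the working definition of expander forbids isolated vertices, I would additionally delete the isolated vertices of $H^*$, which removes no edges). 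Write $U=V(G)\setminus W_T=S_1\sqcup\cdots\sqcup S_T$ for the total deleted set.

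Two features make the choice of $S_i$ work. First, $\vol_G(A)+\vol_G(W_{i-1}\setminus A)=\vol_G(W_{i-1})\le 2e(G)$, so the smaller side has $\vol_G(S_i)\le e(G)$ and hence $e_G(S_i,V(G)\setminus S_i)\ge \kappa\,\vol_G(S_i)$ by $G$'s expansion. Second, since $S_i$ is a side of the chosen cut, the number of $H[W_{i-1}]$-edges leaving $S_i$ is less than $\frac{\kappa}{3}\vol_{H[W_{i-1}]}(S_i)\le\frac{\kappa}{3}\vol_G(S_i)$. The key per-step step is then to note that every $G$-edge leaving $S_i$ is an $H$-edge leaving $S_i$ inside $W_{i-1}$, or an $H$-edge from $S_i$ into the already-deleted set $W_0\setminus W_{i-1}$, or a missing edge incident to $S_i$; writing $b_i$ and $m_i$ for the counts of the latter two types, this gives $\kappa\,\vol_G(S_i)<\frac{\kappa}{3}\vol_G(S_i)+b_i+m_i$, i.e.\ $\frac{2\kappa}{3}\vol_G(S_i)<b_i+m_i$. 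Summing over $i$, I would bound $\sum_i b_i$ by observing that each such edge $\{u,v\}$ with $u\in S_i$, $v\in S_j$, $j<i$, was a boundary edge of $S_j$ in $H[W_{j-1}]$, so $\sum_i b_i<\frac{\kappa}{3}\sum_j\vol_G(S_j)$; and $\sum_i m_i\le 2|M_U|$, where $M_U\subseteq M$ is the set of missing edges incident to $U$ (each counted at most twice, those disjoint from $U$ not at all). Combining, $\vol_G(U)=\sum_i\vol_G(S_i)<\frac{6}{\kappa}|M_U|$.

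To finish, the number of $G$-edges incident to $U$ is at most $\vol_G(U)<\frac{6}{\kappa}|M_U|$, so $W_T$ contains at least $e(G)-\frac{6}{\kappa}|M_U|$ edges of $G$; removing from this count the $|M|-|M_U|$ of those edges that are missing from $H$ gives $e(H^*)\ge e(G)-\frac{6}{\kappa}|M_U|-(|M|-|M_U|)\ge e(G)-\frac{6}{\kappa}|M|\ge(1-\frac{6\eps}{\kappa})e(G)$, using $\frac{6}{\kappa}\ge1$ since $\kappa\le1$; this is positive because $\eps<\kappa/6$, so the peeling genuinely leaves a nonempty expander. I expect the main obstacle to be precisely the reason for the unusual choice of $S_i$: the side of a sparse cut that is smaller in the \emph{current} graph can have very large $\vol_G$ (if nearly all the missing edges went into isolating it), in which case $G$'s expansion gives no leverage and a single deletion could erase a constant fraction of all edges. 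Deleting the side with smaller $\vol_G$ avoids this; the price is the more careful bookkeeping of the $b_i$-edges (those running into previously-deleted sets), which one must still show are, in aggregate, paid for by missing edges.
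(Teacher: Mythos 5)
Your proof is correct and arrives at the same constant $\frac{6\eps}{\kappa}$. The overall strategy is the paper's: iteratively peel sparse cuts from $H$ until what remains is a $\frac{\kappa}{3}$-expander, then charge the volume of the deleted set against the edges in $E(G)\setminus E(H)$. The interesting difference is in the choice of which side to peel. You delete the side $S_i$ with smaller $\vol_G$, which immediately gives $\vol_G(S_i)\le e(G)$ and therefore $e_G(S_i,\overline{S_i})\ge\kappa\vol_G(S_i)$ directly from the expansion of $G$; you then note that the cut in $H[W_{i-1}]$ is small relative to \emph{both} of its sides' current volumes, so the bound $e_{H[W_{i-1}]}(S_i,\overline{S_i})<\frac{\kappa}{3}\vol_{H[W_{i-1}]}(S_i)\le\frac{\kappa}{3}\vol_G(S_i)$ still holds even though $S_i$ may be the larger side of the current cut. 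The paper instead peels the side with smaller $\vol_{H_i}$, and in order to invoke $G$'s expansion on it must first propagate the volume inequality from $H_i$ to $H$ via Lemma~\ref{lem_induced_vol}, and then from $\vol_H$ to $\vol_G$ via the correction term $\dif(X_i)$; this correction has to be carried through the summation. Your device avoids both the lemma and the $\dif$ term, so the accounting needs only two per-step quantities ($b_i$ and $m_i$) rather than the paper's three ($\down$, $\out$, $\dif$), and the charging of the $b_i$-edges against prior cut boundaries is the same telescoping argument as the paper's bound $\sum_{k<i}e_H(X_k,X_i)\le\down(X_k)$. The final arithmetic, including the use of $\kappa\le 1$ in the last inequality, is sound.
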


\needspace{6\baselineskip}
\section{Definitions}

A \emph{graph} $G$ is the data of a set of \emph{vertices} $V(G)$ and a multiset $E(G)$ of \emph{edges} whose elements are unordered pairs of (non-necessarily distinct) vertices in $V(G)$. Note that our definition of a graph allows loops and multiple edges, it is also common to call these objects multigraphs. For any graph $G$, we write $e(G)$ for its number of edges. We also write $e_G(X)$ and $e_G(X,Y)$ for the number of edges in $X$ and the number of edges between vertex sets $X$ and $Y$ respectively.  If $X\subset V(G)$, then we write $\overline{X}:= V(G)\setminus X$. {The \emph{degree} of a vertex is the number of non-loop edges it belongs to, plus twice the number of loops it belongs to.} If $X\subset V(G)$, then we write $\vol(X)=\sum_{v\in X} \deg(v)$. Given a vertex $v$ of degree $2$ not incident to a loop, we call \emph{smoothing}~$v$ the operation that consists in replacing $v$ and its two incident edges by a single edge (see Figure~\ref{fig_smoothing}).\\

\begin{figure}
    \centering
    \includegraphics[scale=0.7]{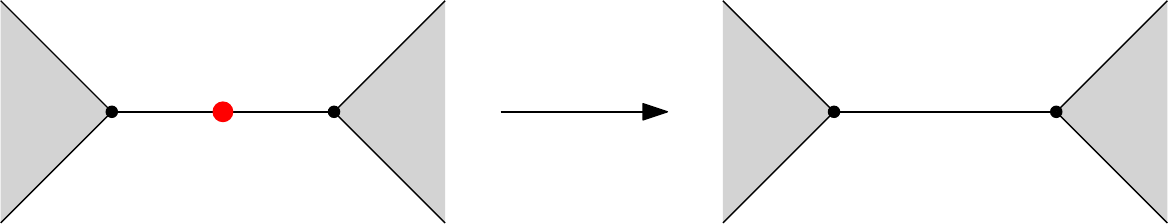}
    \caption{Smoothing a vertex of degree $2$.}
    \label{fig_smoothing}
\end{figure}

If $G$ is a graph, an \emph{induced subgraph} of $G$ is a graph obtained from $G$ by deleting some of its vertices. A \emph{subgraph} of $G$ is a graph obtained from $G$ by deleting some of its vertices and edges. A \emph{topological minor} of $G$ is a graph obtained from $G$ by deleting some of its vertices and edges, and smoothing some of its vertices. {For $H$ a subgraph of $G$ we write $G \setminus H$ to denote the graph with vertex set $V(G)$ and edge set $E(G)\setminus E(H)$.}\\

The following fact is folklore:
\begin{lem}\label{lem_ordering_operations}
Any subgraph of a graph can be obtained by first applying vertex deletions, then edge deletions.
Any topological minor of a graph can be obtained by first applying vertex deletions, then edge deletions, then smoothings.
\end{lem}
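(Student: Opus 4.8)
The plan is to prove both statements by a local rewriting argument on sequences of elementary operations; the second statement is the substantive one.

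For the first statement it is fastest to argue directly. Any subgraph $H$ of $G$ is determined by the data $(V(H),E(H))$ where $V(H)\subseteq V(G)$, $E(H)$ is a sub(multi)set of $E(G)$, and every edge of $H$ has both its endpoints in $V(H)$. Such an $H$ is produced from $G$ by first deleting all the vertices of $V(G)\setminus V(H)$ — which yields the induced subgraph $G[V(H)]$ and, as a side effect, deletes every edge meeting a removed vertex — and then deleting the edges of $E(G[V(H)])\setminus E(H)$; this performs all vertex deletions before all edge deletions.

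For the second statement, fix a sequence $\sigma=(\sigma_1,\dots,\sigma_n)$ of vertex deletions, edge deletions and smoothings that turns $G$ into $H$, and give vertex deletions, edge deletions and smoothings the weights $0$, $1$, $2$. Call an adjacent pair $(\sigma_i,\sigma_{i+1})$ a \emph{descent} if $\sigma_i$ has strictly larger weight than $\sigma_{i+1}$; a sequence has no descent precisely when it performs all vertex deletions, then all edge deletions, then all smoothings. There are three types of descent, and I would rewrite each one, leaving the rest of $\sigma$ unchanged and preserving the graph obtained: ``delete the edge $e$, then delete the vertex $w$'' becomes the single deletion of $w$ when $e$ is incident to $w$, and otherwise the two operations in the opposite order; ``smooth $v$ (with neighbours $a,b$), then delete the vertex $w$'' becomes two vertex deletions when $w\in\{a,b\}$ — using that smoothing a degree-$2$ vertex whose two incident edges are about to disappear has the same effect as deleting that vertex — and otherwise the swapped pair; ``smooth $v$, then delete the edge $e$'' becomes the single deletion of $v$ when $e$ is the edge created by the smoothing, and otherwise the swapped pair. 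Each rewrite strictly decreases the triple $\bigl(\,\#\text{operations},\ \#\text{smoothings},\ \#\text{inversions}\,\bigr)$ in lexicographic order: swapping a descent leaves the first two coordinates fixed and drops the number of inversions by exactly one; the rewrite producing two vertex deletions keeps the number of operations and drops the number of smoothings; and the remaining two rewrites drop the number of operations. Hence the process terminates, and it can only stop at a sequence with no descent, which is of the required form.

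The one point requiring care — and the only real obstacle — is checking that each rewrite is legitimate: that the replacement is still a valid operation sequence in the same context (for instance, when a vertex or edge deletion is pulled in front of a smoothing of $v$, the vertex $v$ must still have degree exactly $2$ and carry no loop, which holds because the deleted vertex is not a neighbour of $v$, respectively the deleted edge is not incident to $v$), and that it produces exactly the same graph (in particular that the edge created by a smoothing and later destroyed leaves no trace, and that $w\in\{a,b\}$ is the only way a subsequent vertex deletion can interfere with a smoothed vertex). This is a finite case check with no difficulty beyond careful bookkeeping; note also that the first statement is the special case in which no smoothings occur, so only the first rewrite rule is needed and the argument specializes.
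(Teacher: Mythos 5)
The paper does not prove this lemma at all: it is stated as ``folklore'' with no argument supplied, so there is no proof in the paper against which to compare yours. Your proof --- sort the operation sequence by local rewrites, with the lexicographic measure $(\#\text{operations},\ \#\text{smoothings},\ \#\text{inversions})$ forcing termination --- is correct and complete, and is essentially the canonical way one would prove such a normal-form statement. The three descent types and their rewrites are the right case analysis, and the non-obvious identities you invoke (deleting $w$ absorbs a prior deletion of an edge incident to $w$; smoothing $v$ then deleting the created edge equals deleting $v$; smoothing $v$ then deleting a neighbour $w\in\{a,b\}$ equals deleting $v$ then $w$) are all true, including in the degenerate case $a=b$ where the smoothing creates a loop. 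You correctly flag the legitimacy check for the swap cases; to make it airtight one should note explicitly that in the swap of (smooth $v$, delete $w$) with $w\notin\{a,b\}$ the vertex $v$ still has degree $2$ and no loop after $w$ is removed (as $w$ is not adjacent to $v$), and that in the swap of (smooth $v$, delete edge $e$) with $e\neq ab$ the edge $e$ is not incident to $v$ (since $v$'s only two edges were consumed by the smoothing, and $e$ survives it), so the smoothing of $v$ is still applicable. With those two sentences spelled out, this is a clean proof of the folklore fact the paper takes for granted.
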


The \emph{edge-expansion} of the vertex set $X \subset V(G)$ is defined as
\[h_G(X):=\frac{e_G(X,\overline{X})}{\min(\vol_G(X),\vol_G(\overline{X}))}\]
where $e_G(X,\overline{X})$ is the number of edges of $G$ that have one endpoint in $X$ and one endpoint in~$\overline{X}$. The graph $G$ is said to be a \emph{$\kappa$-expander} if, for all $X\subset V(G)$, we have
\[h_G(X)\geq \kappa.\]

Notice that we actually only have to check the condition above for subsets $X$ such that $G[X]$ is connected where $G[X]$ is the induced subgraph of $G$ obtained by deleting all vertices in $\overline{X}$. The \emph{Cheeger constant} of a graph $G$ is the largest $\kappa$ such that $G$ is a $\kappa$-expander.\\

A \emph{vertex-coloured graph} is a graph that has a certain number of its vertices of degree $2$ coloured in blue. A blue path of size $\ell$ is a path of $\ell$ consecutive blue vertices. If $G$ is a vertex-coloured graph, then we call $\red(G)$ (the "reduced" version of $G$)  the graph obtained from $G$ by smoothing all its blue vertices.\\

An \emph{edge-coloured graph} is a graph that has a certain number of edges coloured in blue. If $G$ is an edge-coloured graph, then we call $\red(G)$  the graph obtained from $G$ by deleting all its blue edges. If $G$ is an edge-coloured graph, and $v\in V(G)$, then we write $d_\red(v)$ for the degree of $v$ in $\red(G)$.

\needspace{6\baselineskip}
\section{Strategy of proof of Theorem~\ref{thm_topminor_to_induced}}

To obtain an expander induced subgraph from an expander topological minor, we proceed in two steps: first we find an expander subgraph of roughly the same size as the topological minor, then we find an expander induced subgraph of roughly the same size as this subgraph. Theorem~\ref{thm_topminor_to_induced} is an immediate corollary of the following two propositions.

\begin{prop}\label{prop_topminor_to_subgraph}
For all $\kappa, \alpha>0$, and for all $0<\alpha'<\alpha$, there exists a $\kappa'>0$ such that the following holds for every graph $G$.

If there exists a graph $H$ satisfying the following conditions:
\begin{itemize}
\item $e(H)\geq \alpha e(G)$,
\item $H$ is a topological minor of $G$,
\item $H$ is a $\kappa$-expander,
\end{itemize}
then there exists a graph $H^*$ satisfying the following conditions:
\begin{itemize}
\item $e(H^*) \geq \alpha' e(G)$,
\item $H^*$ is a subgraph of $G$,
\item $H^*$ is a $\kappa'$-expander.
\end{itemize}
\end{prop}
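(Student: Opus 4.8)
The plan is to reverse the operations that produce $H$ from $G$. By Lemma~\ref{lem_ordering_operations}, $H$ is obtained from $G$ by first deleting vertices, then deleting edges, then smoothing degree-$2$ vertices. The natural approach is to "unsmooth": take the subgraph $H_1$ of $G$ obtained by performing only the vertex and edge deletions (so $\red$ of $H_1$, with the appropriate vertices coloured blue, equals $H$). Then $H_1$ is a subgraph of $G$, and $H$ is obtained from $H_1$ purely by smoothing blue vertices — i.e. $H_1$ is $H$ with some edges subdivided. The key point is that subdividing edges of a $\kappa$-expander produces a graph that is still an expander, but with a worse constant: if an edge of $H$ is replaced by a path of length $\ell$, the volume can grow by at most a factor related to $\ell$, but the cut sizes are controlled. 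So first I would prove a lemma: if $H$ is a $\kappa$-expander and $H_1$ is obtained from $H$ by subdividing edges, then $H_1$ is a $\kappa_1$-expander for some $\kappa_1=\kappa_1(\kappa,\ell_{\max})$ where $\ell_{\max}$ is the maximum path length — but we have no a priori control on $\ell_{\max}$.

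This is where I expect the main obstacle to lie, and where Theorem~\ref{thm_big_subgraph_contains_expander} enters. The issue is long subdivision paths: a single edge of $H$ blown up into a path of length $\ell$ contributes $\ell-1$ new degree-$2$ vertices to $H_1$, and taking $X$ to be (part of) such a path gives a cut of size $\le 2$ with volume $\approx 2|X|$, so $h_{H_1}(X)$ can be tiny. However, these long paths are edge-cheap: each contributes many edges to $e(H_1)$ but the corresponding edge of $H$ is a single edge. Since $e(H)\ge\alpha e(G)$, and we are willing to lose a little (we only need $\alpha' e(G)$ with $\alpha'<\alpha$), the strategy is to \emph{truncate} the long subdivision paths. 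Concretely, fix a threshold $L=L(\alpha,\alpha',\kappa)$; call an edge of $H$ "long" if its subdivision path in $H_1$ has length $>L$, and let $B$ be the set of long edges. I would argue that we may delete from $H_1$ all but a bounded-length stub of each long path — or, more cleanly, observe that the graph $H_1'$ obtained from $H_1$ by contracting each long subdivision path back down to a path of length exactly $L$ is still a subgraph of $G$ in the relevant sense (here one must be slightly careful: contracting is not a subgraph operation, so instead one removes the interior vertices of the long paths, which may disconnect $H_1$, and then takes a large connected/expanding piece).

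Assembling the argument: after bounding all subdivision lengths by $L$, the resulting graph $H_2$ is a $\kappa_2$-expander with $\kappa_2=\kappa_2(\kappa,L)>0$ by the subdivision lemma, and it is "almost" a subgraph of $G$ — precisely, it is a subgraph of $G$ after possibly a bounded alteration, or one applies Theorem~\ref{thm_big_subgraph_contains_expander} to recover a genuine large induced-expander subgraph. The edge count is controlled because we need $e(H_2)\ge\alpha'e(G)$: each of the original $e(H)\ge\alpha e(G)$ edges of $H$ survives as a path of length between $1$ and $L$, so $e(H_2)\ge e(H)\ge\alpha e(G)\ge\alpha'e(G)$, and if removing long-path interiors forces us to drop a few components, the threshold $L$ is chosen large enough (depending on $\alpha,\alpha'$) that the lost edges total at most $(\alpha-\alpha')e(G)$. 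The output constant is then $\kappa'=\kappa_2(\kappa,L(\alpha,\alpha'))$, which depends only on $\kappa,\alpha,\alpha'$ as required. The delicate bookkeeping — making "delete long-path interiors, keep an expanding remnant" precise while tracking edges against $e(G)$ rather than $e(H)$ — is the part I expect to require the most care, and it is exactly what Theorem~\ref{thm_big_subgraph_contains_expander} is designed to handle.
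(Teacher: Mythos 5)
Your plan follows the same strategy as the paper's proof: reverse the operations via Lemma~\ref{lem_ordering_operations} to get a subgraph $G_1$ of $G$ with $\red(G_1)=H$, observe that the obstruction to $G_1$ being an expander is long subdivision paths, delete the interiors of paths longer than a fixed threshold $M$, take a dominant connected component $C_M$, and invoke Theorem~\ref{thm_big_subgraph_contains_expander} together with the subdivision lemma (Lemma~\ref{lem_smoothing_kappa}). Three points in your sketch are under-specified in ways that hide real content. First, when you write that ``the resulting graph $H_2$ is a $\kappa_2$-expander by the subdivision lemma,'' this is not yet true: $\red(H_2)$ is only a large \emph{subgraph} of the expander $H$, not an expander itself, so Theorem~\ref{thm_big_subgraph_contains_expander} must be applied to $\red(C_M)$ (a subgraph of the expander $H$) to extract an expanding induced subgraph $H'\subseteq \red(C_M)$ first, and only then is the subdivision lemma invoked. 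Second, and this is the piece genuinely missing from your plan, once you have the expander $H'$ sitting inside $\red(C_M)$, you must lift it back up to a subgraph of $C_M$ (and hence of $G$) whose reduction is $H'$ and which still has no long blue paths — this is the paper's Lemma~\ref{lem_lifting}, and without some such lifting argument the plan does not close, since a subgraph of $\red(C_M)$ is not a subgraph of $G$. Third, your claim that the lost edges total at most $(\alpha-\alpha')e(G)$ after choosing $L$ large accounts for the vertices removed on long paths, but not for the possibility that removing them shatters $G_1$ into many small components; the paper's Proposition~\ref{prop_topminor_find_big_component} uses the expansion of $\red(G)=H$ precisely to show that one component must carry almost all of the reduced edges, and this argument is needed.
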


\begin{prop}\label{prop_subgraph_to_induced}
For all $\kappa, \alpha>0$, and for all $0<\alpha'<\alpha$, there exists a $\kappa'>0$ such that the following holds for every graph $G$.

If there exists a graph $H$ satisfying the following conditions:
\begin{itemize}
\item $e(H)\geq \alpha e(G)$,
\item $H$ is a subgraph of $G$,
\item $H$ is a $\kappa$-expander,
\end{itemize}
then there exists a graph $H^*$ satisfying the following conditions:
\begin{itemize}
\item $e(H^*)\geq \alpha' e(G)$,
\item $H^*$ is an induced subgraph of $G$,
\item $H^*$ is a $\kappa'$-expander.
\end{itemize}
\end{prop}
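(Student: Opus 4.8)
\textbf{Proof proposal for Proposition~\ref{prop_subgraph_to_induced}.}

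The plan is to bridge the gap between a subgraph $H$ and an induced subgraph by controlling the ``excess'' edges of $G$ that live between vertices of $H$ but are not in $H$. Write $W:=V(H)$ and let $G[W]$ be the induced subgraph of $G$ on $W$; then $H \subseteq G[W]$, and the edges of $G[W]\setminus H$ are precisely the ones we would like to either reinstate or argue away. The key dichotomy is on the number of such excess edges. \emph{Case 1: few excess edges.} If $e(G[W])\le (1+\delta)e(H)$ for a suitable small $\delta=\delta(\kappa,\alpha,\alpha')$, then $H$ is a subgraph of the expander-candidate $G[W]$ with $e(H)\ge \tfrac{1}{1+\delta}e(G[W])$, i.e. $e(H)\ge (1-\eps)e(G[W])$ with $\eps$ small relative to $\kappa$. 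But wait --- we do not know $G[W]$ is an expander. So instead I would first observe that $G[W]$ itself, having lots of edges and containing the $\kappa$-expander $H$ on the same vertex set, is a $\kappa$-expander too (adding edges to an expander on the same vertex set only increases every $h(X)$, since the numerator $e(X,\overline X)$ weakly increases while $\min(\vol(X),\vol(\overline X))$ also weakly increases --- so one must be slightly careful, but since $H$ is spanning in $G[W]$ the volumes only grow and the cut sizes only grow, and in fact $h_{G[W]}(X)\ge h_H(X)\cdot\frac{\vol_H}{\vol_{G[W]}}$-type bounds can be made to work when $\delta$ is small). A cleaner route: apply Theorem~\ref{thm_big_subgraph_contains_expander} directly with the roles reversed --- take the expander to be... no, we still need an ambient expander.

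The genuinely clean approach is the following. \emph{Case 1 (few excess edges):} if $e(G[W])\le (1+\delta) e(H)$, I claim $G[W]$ is a $\frac{\kappa}{C}$-expander for a constant $C=C(\delta)$. Indeed for any $X\subset W$ with $G[W][X]$ connected, $e_{G[W]}(X,\overline X)\ge e_H(X,\overline X)\ge \kappa\min(\vol_H(X),\vol_H(\overline X))$, while $\vol_{G[W]}(X)\le \vol_H(X)+2(e(G[W])-e(H))\le \vol_H(X)+2\delta e(H)$; combining with $\vol_H(X)\ge 1$ and a global volume comparison gives $h_{G[W]}(X)\ge \kappa/C$. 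Then $G[W]$ is an induced subgraph of $G$ with $e(G[W])\ge e(H)\ge \alpha e(G)$, and we are done with $H^*=G[W]$. \emph{Case 2 (many excess edges):} if $e(G[W])> (1+\delta)e(H)$, then the graph $\tilde H := G[W]$ is a subgraph of $G$ with strictly more edges than $H$ but we have lost the expansion; however we now \emph{iterate}. Actually the right move in Case 2 is different: since there are $\ge \delta e(H)$ edges of $G$ inside $W$ not used by $H$, the graph $H$ is a subgraph of $G$ that is ``wasteful'', and we instead pass to $G[W]$ and restart the whole argument --- this terminates because $e(G[W])$ is bounded by $e(G)$ and strictly increases $e(H)$ each round by a factor $(1+\delta)$, so after $O_\delta(1)$ rounds we land in Case 1. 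At termination we have an induced subgraph on some vertex set $W'\subseteq W$, with $e\ge e(H)\ge \alpha e(G)\ge \alpha' e(G)$, that is a $\kappa'$-expander with $\kappa'=\kappa/C(\delta)$.

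For the iteration to be legitimate I need to check that at each step the vertex set can only shrink and the edge count is monotone, so that the process is well-defined; this is immediate since $G[W']$ for $W'=V(H')\subseteq W$ has $V(G[W'])=W'\subseteq W$ and $E(G[W'])\supseteq E(H')$. I also need the Case~1 expansion estimate to be uniform over the (bounded number of) rounds, which it is since $\delta$ is fixed once $\kappa,\alpha,\alpha'$ are fixed and the loss factor $C(\delta)$ does not compound --- only the \emph{final} application of the Case~1 bound matters, because the intermediate graphs are discarded. \textbf{The main obstacle} I anticipate is the Case~1 estimate itself: showing that adding a controlled number of edges to a spanning $\kappa$-expander keeps it a $\kappa'$-expander requires care when some connected $X$ has very small volume in $H$ (say a single vertex incident only to excess edges in $G[W]$) --- in that regime $\vol_H(X)$ can be tiny while $\vol_{G[W]}(X)$ is large, and one must use that $G[W][X]$ connected forces $X$ to be spanned by $H$-edges only if $H[X]$ is connected, which need not hold. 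The fix is to only verify expansion of $G[W]$ on sets $X$ with $G[W][X]$ connected and then decompose such an $X$ into $H$-connected pieces, bounding the cut of $G[W]$ from below by summing the $H$-cuts of the pieces and carefully accounting for the excess edges joining the pieces; I expect this bookkeeping, rather than any conceptual difficulty, to be where the real work lies, and the constant $\alpha'<\alpha$ slack is exactly what absorbs the volume inflation.
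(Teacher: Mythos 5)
Your proposal hits a real obstacle that you half-notice but do not resolve, and it contains two genuine gaps, one of which is structural.

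\textbf{The Case 1 claim is false as stated.} Having $e(G[W]) \le (1+\delta)e(H)$ does \emph{not} imply that $G[W]$ is a $\kappa/C(\delta)$-expander. The problem is that a global bound on excess edges gives no local control: a bounded set of vertices can absorb all $\delta e(H)$ excess edges. For instance, pick three vertices $v_1,v_2,v_3$ of degree $2$ in $H$ that are far apart, and let $G[W]$ have $k$ parallel edges on each pair $\{v_i,v_j\}$. Then $X=\{v_1,v_2,v_3\}$ has $\vol_{G[W]}(X)=6+6k$ but $e_{G[W]}(X,\overline X)=6$, so $h_{G[W]}(X)=1/(1+k)$, which can be made arbitrarily small while keeping the number of excess edges, $3k$, below $\delta e(H)$ (just take $H$ large enough). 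There is no constant $C(\delta)$; the constant necessarily depends on $e(H)$. This is exactly why the paper does not compare \emph{totals} but instead controls degree inflation \emph{vertex by vertex}: Lemma~\ref{lem_few_bad_vertices} isolates the vertices $v$ where $\deg_G(v)\ge M d_{\red}(v)$ and shows their total $H$-volume is small, and Lemma~\ref{lem_deleting_edge_kappa} shows that if \emph{no} remaining vertex inflates by more than $M$, the expansion constant survives with a factor $1/M$. Your bad vertices $v_1,v_2,v_3$ would simply be discarded in the first step of the paper's algorithm, at small cost in edges. This deletion step is what the $\alpha'<\alpha$ slack is really for, and your framework never deletes any vertices.

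\textbf{The Case 2 iteration is a fixed point.} You propose to replace $H$ by $G[W]$ and restart, but $V(G[W])=W$, so on the next round the induced subgraph on the new vertex set is again $G[W]$; nothing changes and the iteration cannot make progress, let alone terminate in Case 1. Moreover the restarted ``$H$'' is $G[W]$, which is not known to be a $\kappa$-expander (that hypothesis has been lost), so the premises of the proposition are no longer available. The paper's iteration goes the other way: it \emph{shrinks} the vertex set, deleting at step $i>0$ either a set $X_i$ with excessive ambient degree (Case 1 of the algorithm in Proposition~\ref{prop_algo_induced}) or a set with poor $\kappa/3$-expansion (Case 2 of that algorithm), and then runs a potential-function argument (via $\upp$ and $\down$ and inequalities \eqref{eq_up_down}--\eqref{eq_vol_final}) to bound the total volume deleted by $(1+3/\kappa)\vol_H(X_0)$, which in turn is at most $\eps(1+3/\kappa)e(H)$ by Lemma~\ref{lem_few_bad_vertices}. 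That process strictly decreases the graph, so it terminates, and it needs both ingredients simultaneously --- the degree control and the expansion cleanup --- because deleting a bad-expansion set can create new high-inflation vertices and vice versa, which is why the paper cannot cleanly separate the two cases the way Section~\ref{sec_topminor_subgraph} does.

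In short: the idea of restricting to $G[V(H)]$ and measuring the gap from $H$ is the right starting intuition, but the gap must be measured and pruned \emph{per vertex}, not globally, and the cleanup must remove vertices rather than add edges.
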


\needspace{6\baselineskip}
\section{Proof of Theorem~\ref{thm_big_subgraph_contains_expander}}

We start with an easy technical Lemma that will be useful in the future.
\begin{lem}\label{lem_induced_vol}
Let $H$ a graph and $H'$ an induced subgraph of $H$. Let $X\subset V(H')$, we can also consider $X$ as a subset of $V(H)$.
If 
\[\vol_{H'}(X)\leq \vol_{H'}(\overline X)\]
then
\[\vol_{H}(X)\leq \vol_{H}(\overline X).\]
\end{lem}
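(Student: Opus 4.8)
The plan is to track precisely which edges of $H$ are missing from the induced subgraph $H'$. Write $B := V(H)\setminus V(H')$ for the set of vertices that were deleted to form $H'$, so that, viewing $X$ as a subset of $V(H)$, we have $\overline{X} = (V(H')\setminus X)\,\cup\, B$ with the union disjoint. Since $H'$ is an \emph{induced} subgraph, every edge of $H$ with both endpoints in $V(H')$ — in particular every loop at a vertex of $V(H')$ and every edge between two vertices of $V(H')$ — already belongs to $H'$; the only edges of $H$ incident to $V(H')$ but absent from $H'$ are those with exactly one endpoint in $B$. Consequently, for any $Y\subseteq V(H')$ one has the identity
\[ \vol_H(Y) = \vol_{H'}(Y) + e_H(Y,B). \]
The one point requiring a little care here is the loop/degree bookkeeping: a loop contributes $2$ to the degree in both $H$ and $H'$, and a non-loop edge inside $V(H')$ contributes equally to $\vol_H(Y)$ and $\vol_{H'}(Y)$, so the discrepancy is exactly the contribution of the $Y$–$B$ edges, each of which is a non-loop edge adding $1$ to $\vol_H(Y)$.

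Then I would apply this identity with $Y = X$ and with $Y = V(H')\setminus X$, and use additivity of volume over the disjoint sets $V(H')\setminus X$ and $B$:
\[ \vol_H(X) = \vol_{H'}(X) + e_H(X,B), \qquad \vol_H(\overline X) = \vol_{H'}(V(H')\setminus X) + e_H(V(H')\setminus X,B) + \vol_H(B). \]
Now combine the hypothesis $\vol_{H'}(X)\le \vol_{H'}(V(H')\setminus X)$ with the trivial inequality $e_H(X,B)\le \vol_H(B)$, which holds because each of the $e_H(X,B)$ edges counted has an endpoint in $B$ and hence contributes at least $1$ to $\vol_H(B)$. Adding these gives
\[ \vol_H(X) = \vol_{H'}(X) + e_H(X,B) \le \vol_{H'}(V(H')\setminus X) + \vol_H(B) \le \vol_H(\overline X), \]
where in the last step the nonnegative term $e_H(V(H')\setminus X,B)$ is simply discarded. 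This is the desired conclusion.

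There is essentially no serious obstacle: the whole content is the volume-decomposition identity above, and the only thing to be vigilant about is the convention that the degree counts loops twice, together with the mild abuse whereby $\overline X$ denotes $V(H')\setminus X$ in the hypothesis but $V(H)\setminus X$ in the conclusion — once $B$ is introduced explicitly this ambiguity disappears. So I would present the proof in three short lines: the identity $\vol_H(Y)=\vol_{H'}(Y)+e_H(Y,B)$ for $Y\subseteq V(H')$, its specialization to $Y=X$ and $Y=V(H')\setminus X$, and the final chain of inequalities.
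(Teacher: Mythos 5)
Your proof is correct, and it takes a genuinely different route from the paper's. The paper first observes that, in any graph $G$, the inequality $\vol_G(X)\le\vol_G(\overline X)$ is equivalent to $e_G(X)\le e_G(\overline X)$ (since $\vol_G(X)-\vol_G(\overline X)=2\bigl(e_G(X)-e_G(\overline X)\bigr)$), and then the whole proof collapses to the monotonicity of internal edge counts: $e_H(X)=e_{H'}(X)\le e_{H'}(\overline X)=e_H(V(H')\setminus X)\le e_H(V(H)\setminus X)=e_H(\overline X)$. You instead work directly with volumes, introducing $B=V(H)\setminus V(H')$ and the decomposition $\vol_H(Y)=\vol_{H'}(Y)+e_H(Y,B)$ for $Y\subseteq V(H')$, and then compare $e_H(X,B)\le\vol_H(B)$ to absorb the boundary term. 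Both arguments are short and valid. The paper's conversion to edge counts buys the cleanest possible final inequality (pure set monotonicity of $e_H(\cdot)$ with no boundary bookkeeping), at the small cost of first proving the volume/edge equivalence; your approach avoids that equivalence entirely but requires a slightly more careful accounting of the two boundary terms $e_H(X,B)$ and $e_H(V(H')\setminus X,B)$ and the disjoint-union decomposition of $\overline X$. Your remark about the overloaded meaning of $\overline X$ — within $H'$ versus within $H$ — is well taken and matches a point the paper handles implicitly by writing $V(H')\setminus X$ and $V(H)\setminus X$ explicitly.
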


\begin{proof}
Note that in any graph $G$, 
\[\vol_{G}(X)\leq \vol_{G}(\overline X)\] is equivalent to
\[e_G(X)\leq e_G(\overline X).\] {Hence we may suppose $e_{H'}(X)\leq e_{H'}(\overline{X})$ and it remains only to show $e_{H}(X)\leq e_{H}(\overline{X})$.} Since $X\subset V(H')$ and $H'$ is an induced subgraph of $H$, we have
\[e_{H'}(X)=e_H(X).\]
On the other hand 
\[e_{H'}(\overline X)=e_{H'}(V(H')\setminus X)=e_{H}(V(H')\setminus X)\]
where the last equality holds because $H'$ is an induced subgraph of $H$. We also have
\[e_{H}(\overline X)=e_{H}(V(H)\setminus X).\]
Since $V(H')\setminus X\subset V(H)\setminus X$, we have
\[e_H(X)=e_{H'}(X)\leq e_{H'}(\overline X)=e_{H}(V(H')\setminus X)\leq e_{H}(V(H)\setminus X)=e_H(\overline X),\]
which concludes the proof.

\end{proof}

We recall the assumptions of Theorem~\ref{thm_big_subgraph_contains_expander}. We are given $G$ that is a $\kappa$-expander, and a subgraph $H$ of $G$ satisfying $e(H)\geq (1-\eps) e(G)$. We are looking for a big enough induced subgraph $H^*$ of $H$ that is a $\frac{\kappa}{3}$-expander.\\

The strategy is the following: if $H$ itself is not a $\frac{\kappa}{3}$-expander, we remove from it a "bad set" of vertices that has a low edge expansion. If we do not obtain a $\frac{\kappa}{3}$-expander, we go on and remove another bad set, and we keep going until we have a  $\frac{\kappa}{3}$-expander. We will show that the process terminates and that the total size of the bad sets is small.

\paragraph{Setup} We will construct a sequence $X_1,X_2,…$ of disjoint subsets of $V(H)$. We first introduce some notation:
$Y_j=\bigcup_{i\leq j} X_i$, $H_j=H[V(H)\setminus Y_{j-1}]$. We will write $\down(X_i)=e_H(X_i,V(H_{i+1}))$, $\dif(X_i)=e_G(X_i)-e_H(X_i)$ and $\out(X_i){ = e_{G\setminus H} (X_i, V(G) \setminus X_i)}${, i.e.\ $\out(X_i)$ will be the number of edges of $G$ that do not belong to $H$ with exactly one endpoint in $X_i$.} We also write
\[\Delta_j=\sum_{i\leq j-1} \dif(X_i)\]
and
\[O_j=\sum_{i\leq j-1} \out(X_i).\]
See Figure~\ref{fig_process} for an illustration of the process.

\begin{figure}
\center
\includegraphics[scale=0.6]{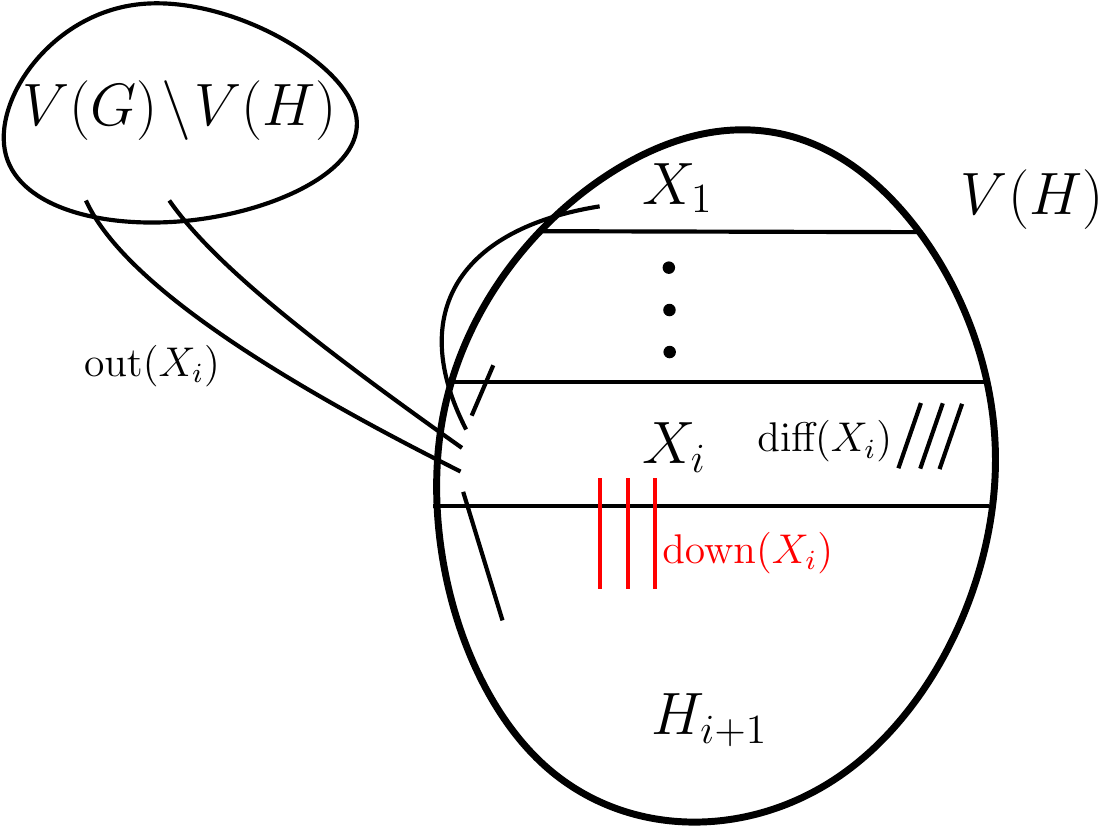}
\caption{Illustration of the process: $H$ is a subgraph of $G$ with red edges depicting those in $H$ and black edges those in $G\setminus H$.}\label{fig_process}
\end{figure}
\paragraph{The process} Now we can define our process. At step $i$, if $H_{i}$ is a $\frac{\kappa}{3}$-expander, then stop the process and output $H_{i}$. Otherwise, there must exist a nonempty subset $X_i$ of $V(H_{i})$ with \begin{equation*}
\vol_{H_{i}}(X_i)\leq \vol_{H_{i}}(\overline X_i )\end{equation*} such that
\begin{equation}\label{eq_ineq_down}
\down(X_i)< { \frac{\kappa}{3} \vol_{H_{i}}(X_i) } \leq \frac{\kappa}{3}  \vol_{G}(X_i).
\end{equation}

It is immediate this process terminates because $H_i$ gets strictly smaller at each step.

\paragraph{Lower bounding the size of the output} Fix some $j$.
First, note that $O_j$ counts edges that are in~$G$ but not in $H$ (each edge can be counted twice), and same for $\Delta_j$ (each edge is only counted once this time). The edges counted by $O_j$ and $\Delta_j$ are disjoint, hence
\begin{equation}\label{eq_error_term}
    O_j/2+\Delta_j\leq \eps e(G).
\end{equation}

Also, notice that 
\begin{equation}\label{eq_decomposing_eH}
    e_G(X_i,\overline X_i)=\down(X_i)+\sum_{k<i}e_H(X_i, X_k)+\out(X_i).
\end{equation}

For all $i$, $H_i$ is an induced subgraph of $H$, hence by Lemma~\ref{lem_induced_vol} we have $\vol_H(X_i)\leq \vol_H(\overline X_i)$, and therefore $\vol_G(X_i)-2\dif(X_i)\leq \vol_G(\overline X_i)$.
Since $G$ is a $\kappa$-expander we have
\[\kappa(\vol_G(X_i)-2\dif(X_i))\leq e_G(X_i,\overline X_i)\]
which implies, since $\kappa\leq 1$,
\begin{equation}\label{eq_ineq_vol_diff}\kappa\vol_G(X_i)\leq e_G(X_i,\overline X_i)+2\dif(X_i).\end{equation}

Observe by~\eqref{eq_decomposing_eH} and \eqref{eq_ineq_vol_diff} respectively
\begin{equation*}
\sum_{k<i}e_H(X_k,X_i) +\out(X_i)+2\dif(X_i) = e_G(X_i, \overline X_i) - \down(X_i) + 2\dif(X_i) \geq \kappa \vol_G(X_i) - \down(X_i).
\end{equation*}

Hence by~\eqref{eq_ineq_down} applied twice
\begin{equation}
    \notag \sum_{k<i}e_H(X_k,X_i)+\out(X_i)+2\dif(X_i)\geq \frac{2\kappa}{3}\vol_G(X_i)\geq 2\down(X_i).
\end{equation}

Summing over all $i\leq j-1$, we obtain
\begin{equation}\label{eq_ineq_sum}
    \sum_{k<i\leq j-1}e_H(X_k,X_i)+O_j+2\Delta_j\geq \frac{2\kappa}{3}\vol_G(Y_{j-1})\geq 2\sum_{i\leq j-1} \down(X_i).
\end{equation}

Now, for all fixed $k$, note that 
\begin{equation} 
\notag \sum_{k<i}e_H(X_k,X_i)\leq \down(X_k),\end{equation}
hence the first inequality of~\eqref{eq_ineq_sum} implies
\begin{equation}\label{eq_A}
    \frac{2\kappa}{3} \vol_G(Y_{j-1}) \leq \sum_{k\leq j-2}\down(X_k)+O_j+2\Delta_j,
\end{equation}
and the second inequality of~\eqref{eq_ineq_sum} implies
\begin{equation}\label{eq_B}
O_j +2\Delta_j\geq 2\sum_{k\leq j-1} \down(X_k)-\sum_{k\leq j-2}\down(X_k)\geq \sum_{k\leq j-2} \down(X_k).
\end{equation}
Combining~\eqref{eq_A} and~\eqref{eq_B}, we obtain
\begin{equation}\label{eq_estimation_final_thm2}
    \vol_G(Y_{j-1})\leq \frac{3}{\kappa} (O_j+2\Delta_j)\leq \frac{6}{\kappa}\eps e(G)
\end{equation}
where the second inequality comes from~\eqref{eq_error_term}.\\

Now we can prove Theorem~\ref{thm_big_subgraph_contains_expander}:

\begin{proof}[Proof of Theorem~\ref{thm_big_subgraph_contains_expander}]
Take $G$ and $H$ satisfying the assumptions of the theorem. Now, run the process until it stops at time $j$ and outputs $H_j$. Take $H^*=H_j$. It is a $\frac{\kappa}{3}$-expander by definition of the process. We have $e(H^*)\geq e(G)-\vol_G(Y_{j-1})$, which by~\eqref{eq_estimation_final_thm2} finishes the proof.

\end{proof}

\needspace{6\baselineskip}
\section{From topological minors to subgraphs}\label{sec_topminor_subgraph}
In this section, we prove Proposition~\ref{prop_topminor_to_subgraph}. We start by proving that replacing edges by paths of bounded length in an expander still yields an expander.

\needspace{4\baselineskip}
\begin{lem}\label{lem_smoothing_kappa}
Let $M\geq 2$ and let $H$ be a $\kappa$-expander. Suppose ${G}$ is obtained from $H$ by replacing each edge by a path with at most $M$ edges. Then ${G}$ is a $\frac{\kappa}{2M-1}$-expander.
\end{lem}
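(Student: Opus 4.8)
The plan is to verify the expander condition for $G$ directly: by definition it suffices to show $h_G(X)\ge \frac{\kappa}{2M-1}$ for every $X$ with $\emptyset\ne X\subsetneq V(G)$, and we may assume $\kappa\le 1$ since $h_H(Y)\le 1$ for every nontrivial $Y$. Throughout, regard $V(G)$ as $V(H)$ together with the \emph{internal} vertices introduced by the subdivisions, each of degree $2$ and lying on a unique path $P_e$, $e\in E(H)$; for $S\subseteq V(G)$ let $\iota(S)$ be the number of internal vertices in $S$, and record the identity $\vol_G(S)=\vol_H(S\cap V(H))+2\,\iota(S)$. Write $A=X\cap V(H)$.

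Let $S\in\{X,\overline X\}$ be the side whose original part $A_S:=S\cap V(H)$ has the smaller $H$-volume, and put $\overline{A_S}=V(H)\setminus A_S$, so that applying the $\kappa$-expander property of $H$ to $A_S$ gives $e_H(A_S,\overline{A_S})\ge\kappa\,\vol_H(A_S)$. Call an internal vertex of $S$ \emph{parasitic} if it lies on a path $P_e$ both of whose endpoints belong to $\overline{A_S}$ (hence are outside $S$), and let $p$ be the number of parasitic vertices. The heart of the proof is two estimates. First, a cut lower bound $e_G(X,\overline X)\ge e_H(A_S,\overline{A_S})+\frac{2p}{M-1}$: each $H$-edge across $(A_S,\overline{A_S})$ contributes at least one $G$-edge across $(X,\overline X)$ along the corresponding path, while on each path whose two endpoints lie outside $S$ the parasitic vertices split into maximal runs, each of length at most $M-1$, each contributing two crossing edges. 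Second, a volume upper bound $\vol_G(S)\le M\,\vol_H(A_S)+(M-1)\,e_H(A_S,\overline{A_S})+2p$: the internal vertices of $S$ on paths inside $A_S$ number at most $(M-1)\,e_H(A_S)$, those on paths across $(A_S,\overline{A_S})$ number at most $(M-1)\,e_H(A_S,\overline{A_S})$, and substituting $2e_H(A_S)=\vol_H(A_S)-e_H(A_S,\overline{A_S})$ collapses this to the stated bound.

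Combining, and using $e_H(A_S,\overline{A_S})\ge\kappa\,\vol_H(A_S)$ and $\kappa\le 1$,
\[
\kappa\,\vol_G(S)\ \le\ M\,e_H(A_S,\overline{A_S})+\kappa(M-1)\,e_H(A_S,\overline{A_S})+2\kappa p\ \le\ (2M-1)\,e_H(A_S,\overline{A_S})+2\kappa p ,
\]
whereas from the cut bound and $\frac{2M-1}{M-1}\ge 1\ge\kappa$,
\[
(2M-1)\,e_G(X,\overline X)\ \ge\ (2M-1)\,e_H(A_S,\overline{A_S})+\frac{2(2M-1)}{M-1}\,p\ \ge\ (2M-1)\,e_H(A_S,\overline{A_S})+2\kappa p\ \ge\ \kappa\,\vol_G(S).
\]
Since $\vol_G(S)\ge\min(\vol_G(X),\vol_G(\overline X))$, this says precisely $h_G(X)\ge\frac{\kappa}{2M-1}$, completing the proof.

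The step I expect to be the real obstacle is the volume estimate — specifically, getting the constant to be exactly $2M-1$ rather than roughly $3M$. Both the internal vertices of paths inside $A_S$ and the parasitic vertices individually threaten a spurious factor near $2$, and the two ideas that avoid this are: trading edge counts for $H$-volume via $\vol_H(A_S)=2e_H(A_S)+e_H(A_S,\overline{A_S})$, and carrying $p$ as a symbolic quantity so that it cancels against the extra crossing edges it forces, rather than bounding it against $e_G(X,\overline X)$ on its own. (Alternatively one could invoke the standard reduction to connected $X$ and check that parasitic vertices cannot exist once $A_S\ne\emptyset$, at the price of handling $A_S=\emptyset$ and $A_S=V(H)$ by hand; the route above needs neither case analysis nor connectivity.)
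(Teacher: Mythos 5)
Your proof is correct, and it takes a genuinely different route from the paper's. The paper reduces to connected $Y$, disposes of the degenerate cases $X=\emptyset$ and $\overline X=\emptyset$ by hand, and then for connected $Y$ with both $X,\overline X$ nonempty bounds $\vol_G(Y)\le(2M-1)\vol_H(X)$ by arguing that every black (subdivision) vertex of $Y$ lies on a path incident to some vertex of $X$ — this is where connectivity is used — and then asserts ``similarly'' that $\vol_G(\overline Y)\le(2M-1)\vol_H(\overline X)$ before closing with the min-lemma. You instead choose the side $S\in\{X,\overline X\}$ according to which trace $A_S=S\cap V(H)$ has smaller $H$-volume (exactly the side on which $\kappa$-expansion of $H$ bites), and you track explicitly, as a symbolic quantity $p$, the subdivision vertices that the connectivity reduction is supposed to eliminate. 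The payoff is that your argument needs no connectivity hypothesis and no case analysis: $p$ appears on both sides (it forces at least $\frac{2p}{M-1}$ extra crossing edges and contributes at most $2p$ to $\vol_G(S)$) and cancels cleanly.

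The difference is more than cosmetic. Even when $Y$ is connected, $\overline Y$ need not be, and $\overline Y$ can contain internal vertices of a path $P_e$ both of whose $H$-endpoints lie in $X$ — precisely your ``parasitic'' vertices, on the opposite side. For instance take $H=K_4$ on $\{a,b,c,d\}$, $M=3$, every edge subdivided twice, and $Y$ the subdivided path $a\to c\to b$ (so $X=\{a,b,c\}$, $\overline X=\{d\}$, and $G[Y]$ is a connected $7$-vertex path). The two subdivision vertices of $ab$ lie in $\overline Y$, giving $\vol_G(\overline Y)=3+8\cdot 2=19$ while $(2M-1)\vol_H(\overline X)=5\cdot 3=15$; so the paper's inequality (e2) fails here, although the lemma's conclusion for this $Y$ still holds ($h_G(Y)=5/17>\kappa/5$). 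In other words the ``similarly'' step in the paper's proof is not justified and can be false. Your bookkeeping of parasitic vertices, together with choosing $S$ by the $H$-volume of its trace rather than by its $G$-volume, repairs exactly this point, since you only ever upper-bound $\vol_G(S)$ for the one chosen side $S$ and let $\vol_G(S)\ge\min(\vol_G(X),\vol_G(\overline X))$ finish the job. So your proof is not merely an alternative; it closes a genuine gap in the published argument.
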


\begin{figure}
\center
\includegraphics[scale=0.5]{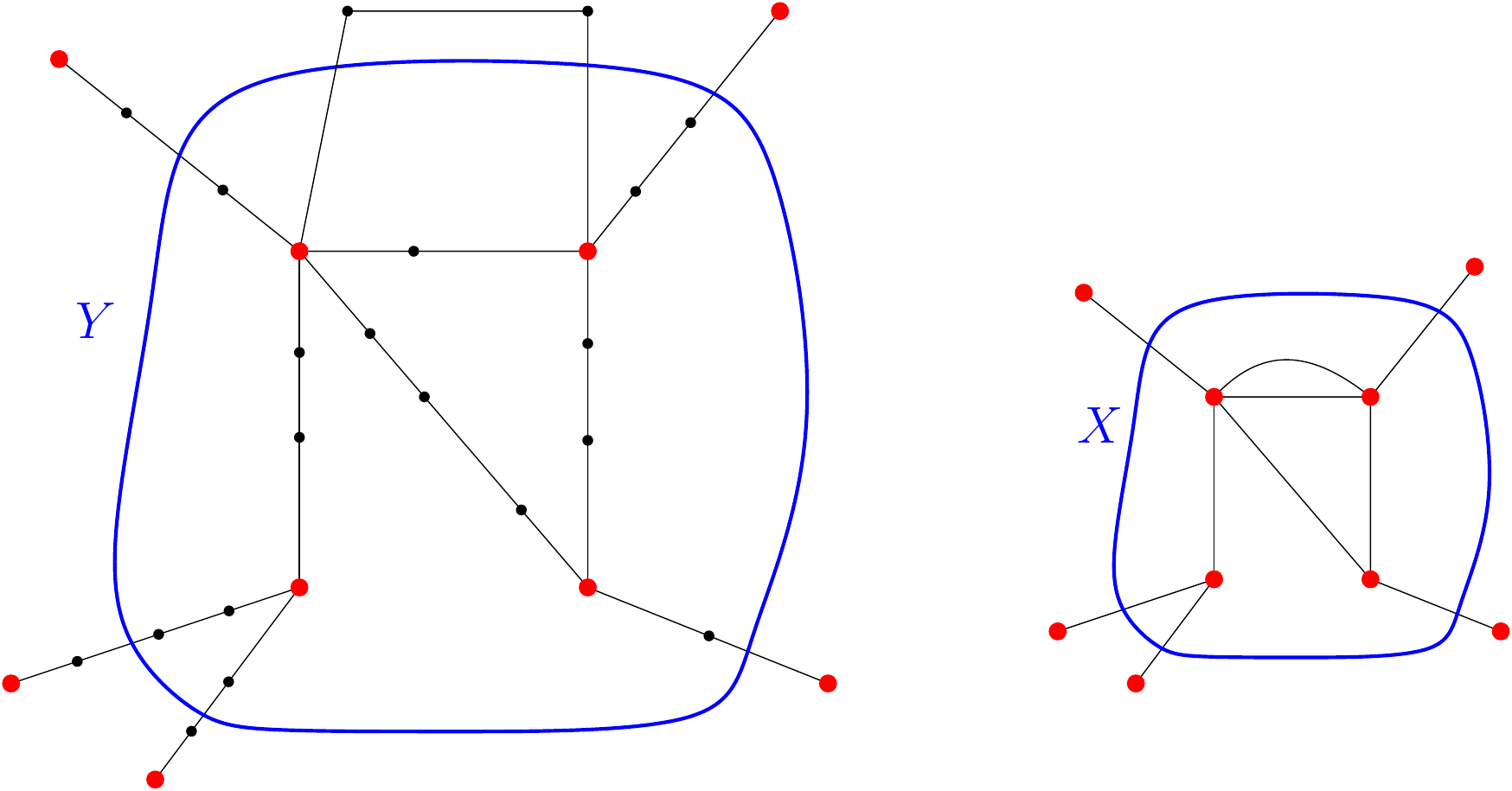}
\caption{Comparing the edge expansions of $Y$ in $G$ and $X$ in $H$ in Lemma~\ref{lem_smoothing_kappa}. Here, $M=3$.}\label{fig_go_big}
\end{figure}

\begin{proof} In ${G}$, colour in red the vertices that come from $H$, and the rest in black. 
Let $Y$ be a subset of $V({G})$ such that  ${G}[Y]$ is connected.  
Let $X$ be the set of red vertices in $Y$. We want to lower bound $h_{G}(Y)$ in terms of $h_H(X)$. See Figure~\ref{fig_go_big} for an illustration. \\

If $X=\emptyset$, then $G[Y]$ is a path on $\leq M-1$ vertices and so $\vol_G(Y)\leq 2(M-1)$ and $e_G(Y,\overline{Y})=2$. Hence 
\[h_{G}(Y)\geq \frac{e_G(Y,\overline{Y})}{\vol_G(Y)} \geq \frac{1}{M-1}.\]

Similarly if $\overline{X}=\emptyset$ then $h_G(Y)=h_G(\overline{Y})\geq 1/(M-1)$. \\

Let us consider now the case $X\neq\emptyset$ and $\overline{X}\neq \emptyset$. The number of edges of $H$ which are incident to a vertex of $X$ is $e_H(X)+e_H(X,\overline{X})\leq \vol_H(X)$. Each edge of $H$ is replaced by a path with at most~$M$ edges, therefore the number of black vertices in $Y$ can be bounded above by~$(M-1)\vol_H(X)$. All these vertices have degree~$2$, by definition of~$G$, hence
\begin{equation}\label{e1}
    \vol_G(Y)\leq\vol_H(X)+2 (M-1) \vol_H(X)=(2M-1)\vol_H(X)
\end{equation}

and similarly
\begin{equation}\label{e2}
    \vol_G(\overline Y)\leq (2M-1)\vol_H(\overline X).
\end{equation}

Now, each edge counted in $e_H(X,\overline{X})$ corresponds to a path in $G$ between $Y$ and
$\overline{Y}$, therefore
\begin{equation}\label{e3}
  e_{G}(Y,\overline{Y}) \geq e_H(X,\overline{X}).  
\end{equation}

It is easy to check that $a' \leq c a$ and $b'\leq cb$ together imply $\min\{a', b'\}\leq c \min\{a,b\}$ and therefore, by \eqref{e1}, \eqref{e2} and~\eqref{e3}:
\[h_G(Y) \geq \frac{1}{2M-1}h_H(X).\]
This concludes the proof.

\end{proof}

\needspace{6\baselineskip}
The next proposition helps us find large subgraphs without long blue paths in vertex-coloured graphs. Recall a vertex coloured graph $G$ has a subset of its degree two vertices coloured blue and $\red(G)$ is obtained from $G$ by smoothing all its blue vertices.

\begin{prop}\label{prop_topminor_find_big_component}
For all $\kappa,\eps,\alpha>0$, there exists $M$ such that the following holds for all vertex-coloured graphs. Let $G$ be such that $\red(G)$ is a $\kappa$-expander and $e(\red(G))\geq \alpha e(G)$, let $G_M$ be the {induced} subgraph of $G$ obtained by deleting all the blue {vertices in} blue paths of length greater than $M$, and let $C_M$ be a connected component of $G_M$ such that $e(\red(C_M))$ is maximal. Then
\[e(\red(C_M))\geq (1-\eps)e(\red(G)),\] and $\red(C_M)$ is a subgraph of $\red(G)$.
\end{prop}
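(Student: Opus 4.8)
The goal is to show that after deleting all blue vertices lying on long blue paths from a vertex-coloured graph $G$, the largest connected component (measured by $e(\red(\cdot))$) of the resulting graph $G_M$ retains almost all of $\red(G)$'s edges. First I would understand the structure of $G$ in terms of $\red(G)$: the graph $G$ is obtained from $\red(G)$ by subdividing some of its edges (each blue vertex corresponds to a subdivision point). So there is a natural correspondence between edges of $\red(G)$ and maximal blue paths in $G$: an edge $e$ of $\red(G)$ either survives as an edge of $G$ (no subdivision) or corresponds to a blue path of some length $\ell(e)\geq 1$. Deleting the blue vertices in blue paths of length $>M$ is the same as deleting, from $G$, the interiors of all paths corresponding to edges $e$ of $\red(G)$ with $\ell(e) > M$; equivalently, in the $\red(G)$ picture, we are deleting a set $S$ of edges of $\red(G)$, namely those subdivided into more than $M$ pieces.

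\textbf{Key steps.} The first quantitative step is to bound $|S|$. Each edge in $S$ accounts for more than $M$ edges of $G$ (its blue path has $> M$ edges). These blue paths are edge-disjoint in $G$, so $M|S| < e(G)$, giving $|S| < e(G)/M \leq e(\red(G))/(\alpha M)$ by the hypothesis $e(\red(G))\geq \alpha e(G)$. Choosing $M$ large (depending on $\kappa, \eps, \alpha$) makes $|S|$ an arbitrarily small fraction of $e(\red(G))$. The second step is the combinatorial heart: I want to show that deleting few edges from the expander $\red(G)$ leaves a connected component containing almost all edges. Here is where the $\kappa$-expansion of $\red(G)$ is used. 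Let $F = \red(G)\setminus S$. Consider the connected components $C^{(1)}, C^{(2)}, \dots$ of $F$. For each component $C^{(t)}$, all edges leaving $V(C^{(t)})$ in $\red(G)$ must lie in $S$, so $e_{\red(G)}(V(C^{(t)}), \overline{V(C^{(t)})}) \leq |S|$. By $\kappa$-expansion of $\red(G)$, $\kappa \min(\vol(V(C^{(t)})), \vol(\overline{V(C^{(t)})})) \leq |S|$; so for every component except at most one (the "big" one whose complement has the smaller volume), $\vol(V(C^{(t)})) \leq |S|/\kappa$, hence $e_F(C^{(t)}) \leq e_{\red(G)}(V(C^{(t)})) \leq \vol(V(C^{(t)}))/2 \leq |S|/(2\kappa)$ — wait, I should be careful: the sum of volumes of all the small components is also controlled, since these vertex sets are disjoint and each has expansion $\geq\kappa$ relative to its complement... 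Actually the cleanest argument: let $B$ be the union of vertex sets of all components other than the one of maximum volume. For the largest-volume component $C^{(1)}$ we have $\overline{V(C^{(1)})} = B$, and the edges between them are in $S$, so $\kappa \vol(B) \leq \kappa \min(\vol(V(C^{(1)})),\vol(B)) \leq e_{\red(G)}(V(C^{(1)}), B) \leq |S|$, giving $\vol(B) \leq |S|/\kappa$. Therefore the total number of edges of $\red(G)$ not inside $C^{(1)}$ — those incident to $B$ plus those in $S$ — is at most $\vol(B) + |S| \leq (1+1/\kappa)|S|$, wait, edges in $S$ incident to $C^{(1)}$ only: better to just say $e(\red(G)) - e_F(C^{(1)}) \leq |S| + e_F(B) \leq |S| + \vol(B) \leq |S|(1 + 1/\kappa)$. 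Then pick $M$ so that $|S|(1+1/\kappa) \leq \eps\, e(\red(G))$.

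\textbf{Translating back and the last step.} Finally I need to connect $C^{(1)}$, a component of $F\subseteq \red(G)$, with $C_M$, a component of $G_M$, and verify $\red(C_M)$ is a subgraph of $\red(G)$. The point is that connected components of $G_M$ correspond bijectively to connected components of $F$ (subdividing/un-subdividing edges doesn't change the component structure, and we deleted exactly the interiors of $S$-paths), and for the component $C_M$ corresponding to $C^{(1)}$ one has $\red(C_M) = $ the subgraph of $\red(G)$ induced on the edge set $E(C^{(1)})$, which is visibly a subgraph of $\red(G)$; moreover $e(\red(C_M)) = e_F(C^{(1)})$. Since $C_M$ is chosen to maximise $e(\red(C_M))$ over components of $G_M$, we get $e(\red(C_M)) \geq e_F(C^{(1)}) \geq (1-\eps)e(\red(G))$, as desired. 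The main obstacle is the middle step: making precise that the expander property forces all but one component of $F$ to have small volume, and bundling all the "lost" edges (those in $S$ and those hanging off small components) into a single $O(|S|/\kappa)$ bound — one has to be slightly careful that the small components' total volume, not just each individual one, is bounded, which as sketched above follows by applying expansion to the single set $B$ (the union of all non-maximal components) rather than component-by-component. Everything else is bookkeeping about subdivisions and the choice of $M$.
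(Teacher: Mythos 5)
Your high-level strategy is the same as the paper's: translate the deletion of long blue paths into the deletion of a small edge set $S\subseteq E(\red(G))$ (your $|S|$ is the paper's $p_M$), then use $\kappa$-expansion to show that all but an $O(|S|/\kappa)$ fraction of $\red(G)$ sits inside one component of $\red(G)\setminus S$. The bookkeeping with subdivisions, the bijection between components of $G_M$ and components of $F=\red(G)\setminus S$, and the choice of $M$ are all correct.

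However, your ``cleanest argument'' has a genuine gap. You apply expansion once to the partition $(V(C^{(1)}),B)$ and write $\kappa\vol(B)\leq\kappa\min\bigl(\vol(V(C^{(1)})),\vol(B)\bigr)\leq|S|$. The first inequality silently assumes $\vol(B)\leq\vol(V(C^{(1)}))$, i.e.\ that the \emph{union} of all non-maximal components has total volume at most that of the single maximal one. Maximality of $C^{(1)}$ only bounds each $C^{(t)}$ individually, not their sum, so this step is unjustified and the chain of inequalities can fail. The per-component version you started and then abandoned is the one that actually works: for each $t\neq 1$ you have $V(C^{(1)})\subseteq\overline{V(C^{(t)})}$ and $\vol(C^{(1)})\geq\vol(C^{(t)})$, hence $\min\bigl(\vol(C^{(t)}),\vol(\overline{C^{(t)}})\bigr)=\vol(C^{(t)})$; expansion then gives $\kappa\vol(C^{(t)})\leq e_{\red(G)}\bigl(C^{(t)},\overline{C^{(t)}}\bigr)$, and summing over $t\neq 1$ yields $\kappa\vol(B)\leq 2|S|$ since each edge of $S$ is counted at most twice. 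The extra factor of $2$ is harmless and gets absorbed into the choice of $M$. This is in substance what the paper does, except that it selects the distinguished component by maximising $e(\red(\cdot))$ rather than volume, which makes the per-component inequality $e_{\red(G)}(\red(C),\overline{\red(C)})\geq 2\kappa\, e(\red(C))$ immediate; either choice of ``largest'' works once the sum is taken component by component.
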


{Note that $\red(C_M)$ need not be an induced subgraph of $\red(G)$. A path of in $G$ of blue vertices of length greater than $M$ from $C_M$ to itself will form an edge in $\red(G)$ but not in $\red(C_M)$.}

\begin{proof}
The last point of the proposition follows  from the fact that $C_M$ is a subgraph of $G$. See Figure~\ref{fig_components} for an illustration of this proof.\\

\begin{figure}
\center
\includegraphics[scale=0.5]{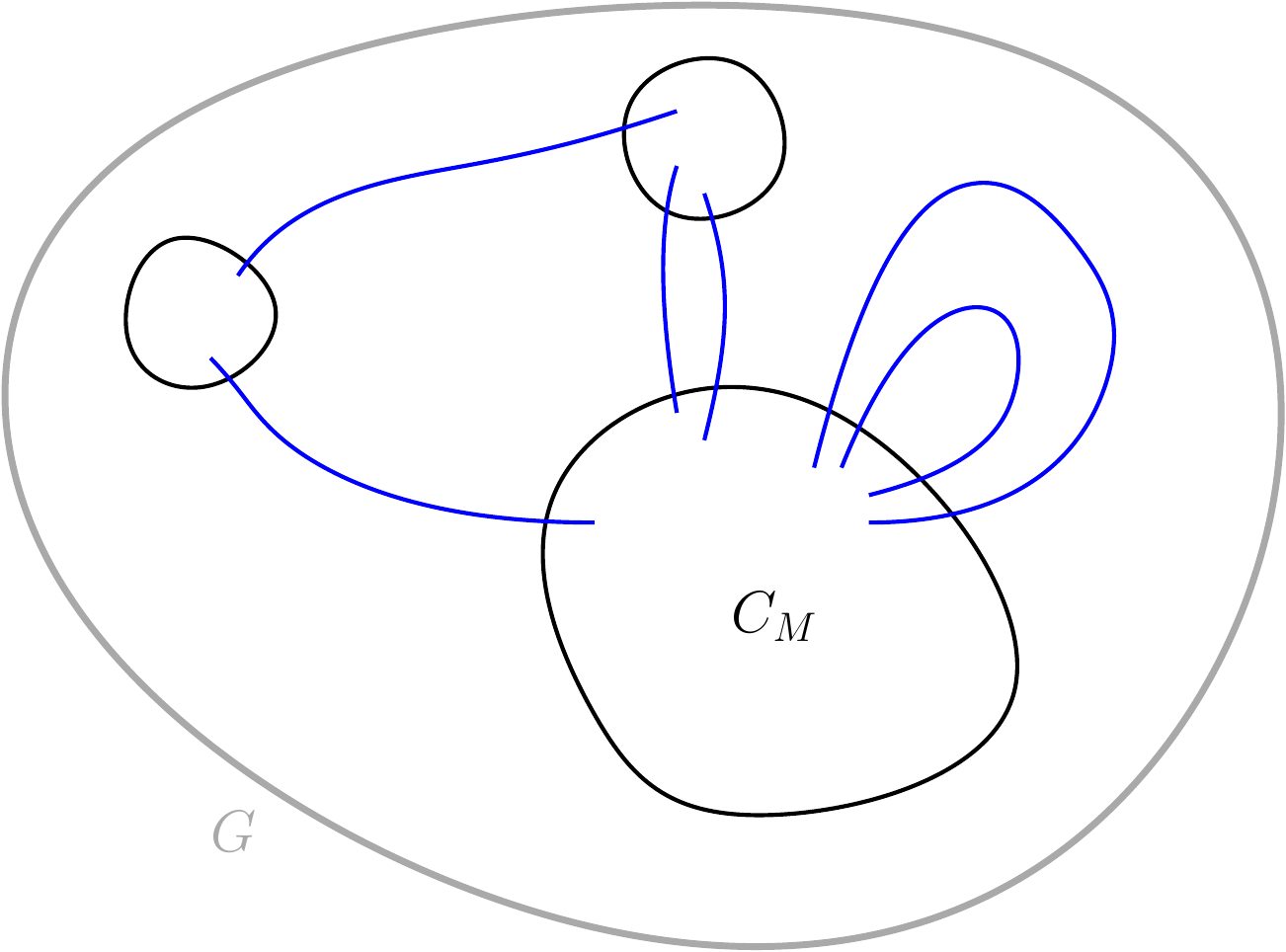}
\caption{In black, the components of $G_M$, in blue, the blue paths of length $>M$.}\label{fig_components}
\end{figure}
Let $M$ be the smallest integer satisfying
\begin{equation}\label{eq_ineq_M}
M>\frac{\alpha}{1-\alpha}\frac{1+\frac{1}{\kappa}}{\eps}.
\end{equation}

Let $p_M$ be the number of blue paths of size $> M$ in $G$. The quantity $e(G)-e(\red(G))$ counts exactly the number of blue vertices in $G$, hence 
\[Mp_M\leq e(G)-e(\red(G)).\]
Now, using the inequality $e(G)\leq \frac 1\alpha e(\red(G))$, we obtain 
\[Mp_M\leq \frac{1-\alpha}{\alpha}e(\red(G))\]

which by~\eqref{eq_ineq_M} implies
\begin{equation}\label{eq_ineq_pM}
p_M\leq \frac{\eps}{1+\frac{1}{\kappa}}e(\red(G)).
\end{equation}
We also have
\begin{equation}\label{eq_GM_pM}
e(\red(G))=e(\red(G_M))+p_M.
\end{equation}
Let $\mathcal{C}(G_M)$ denote the set of connected components in $G_M$. 
Notice that $C$ being a subgraph of $G$ implies that $\red(C)$ is a subgraph of $\red(G)$, thus $\overline{\red(C)}=\red(G)\setminus \red(C)$ is well-defined. Consider  $C\in\mathcal{C}(G_M)$. In $\red(G)$, any edge connecting $\red(C)$ to $\overline{\red(C)}$ corresponds to a blue path of size $>M$ in $G$. Hence 
\begin{equation}\label{eq_out}
\sum_{C\in \mathcal{C}(G_M)} e_{\red(G)}(\red(C), \overline{\red(C)})\leq 2p_M
\end{equation}
 (this is not an equality since some blue paths of size $>M$ might go from a component $C\in\mathcal{C}(G_M)$ to itself, see Figure~\ref{fig_components}).\\

For every connected component $C\neq C_M$ of $G_M$, we have \[e(\red(C))\leq e(\overline{\red(C)}),\] and since $\red(G)$ is a $\kappa$-expander, we have:
\[ e_{\red(G)}(\red(C), \overline{\red(C) )} \geq 2\kappa e(\red(C))\]
because the volume in $\red(G)$ of the vertices of $\red(C)$ is bigger than $2e(\red(C))$.
If we sum over all $C\neq C_M$, we have, by~\eqref{eq_out}
\[p_M\geq \kappa (e(\red(G_M))-e(\red(C_M)))\]
therefore, by~\eqref{eq_GM_pM}
\[e(\red(C_M))\geq e(\red(G))-(1+\frac{1}{\kappa})p_M\]
and because of~\eqref{eq_ineq_pM} we obtain
\[e(\red(C_M))\geq (1-\eps)\: e(\red(G)),\]
which concludes the proof.

\end{proof}

The following lemma helps us invert the $\red$ operation in some sense.

\begin{lem}\label{lem_lifting}
Let $G$ be a vertex-colored graph, and $H$ and induced subgraph of $\red(G)$, then there exists a vertex-colored graph $H^*$ that is an induced subgraph of $G$ such that $\red(H^*)=H$.
\end{lem}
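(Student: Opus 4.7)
The plan is to lift $H$ back through the smoothing operation by identifying, for each edge $e\in E(H)$, the chain in $G$ that collapses to $e$ under $\red$, and then taking $H^*$ to be the induced subgraph of $G$ on the union of all these chains together with the preimages of the vertices of $V(H)$.

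More precisely, the non-blue vertices of $G$ are in canonical bijection with $V(\red(G))$, and every edge $e$ of $\red(G)$ arises from a unique path in $G$ joining two (possibly coinciding) non-blue vertices through a (possibly empty) sequence $P_e$ of blue degree-$2$ vertices. I would set
\[V(H^*) \;:=\; \{\text{non-blue vertices of }G\text{ corresponding to }V(H)\}\;\cup\;\bigcup_{e\in E(H)} P_e,\]
and define $H^*:=G[V(H^*)]$ with the colouring inherited from $G$. By construction $H^*$ is an induced subgraph of $G$, so the only remaining task is to verify that $\red(H^*)=H$.

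The first step of the verification is to check that every blue vertex of $H^*$ still has degree $2$ in $H^*$, so smoothing remains well-defined. This is immediate: a blue vertex $b\in V(H^*)$ lies on some chain $P_e$ with $e\in E(H)$, its two neighbours in $G$ are its two neighbours along that chain, and both are contained in $V(H^*)$ (either as further blue vertices of $P_e$ or as one of the non-blue endpoints of the chain, which lies in $V(H)$). After smoothing, each $P_e$ collapses to a single edge matching $e$.

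The one subtle point, which I expect to be the only real obstacle, is to rule out spurious edges in $G[V(H^*)]$ that would survive smoothing and thereby create edges of $\red(H^*)$ not present in $H$. Since blue vertices have degree $2$ and belong to uniquely determined maximal blue chains, any edge of $G$ either lies inside such a chain or has at least one non-blue endpoint; and an edge with a blue endpoint $b\in V(H^*)$ is forced to be an edge of the chain containing $b$. Consequently, the only candidate for a spurious edge is a direct edge $uv$ of $G$ between two non-blue vertices $u,v\in V(H^*)$ not arising from any selected $P_e$. But then $uv$ is also an edge of $\red(G)$ between two vertices of $V(H)$, and since $H$ is an induced subgraph of $\red(G)$, this edge already lies in $E(H)$. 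Hence no spurious edges arise, $\red(H^*)=H$, and the lemma follows.
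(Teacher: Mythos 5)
Your construction is the same as the paper's, just phrased additively rather than by deletion: the paper removes from $G$ the non-blue vertices outside $V(H)$ together with every blue vertex whose maximal blue path has an endpoint outside $V(H)$, which (because $H$ is induced in $\red(G)$) leaves exactly your set $V(H^*)$. Your explicit check that no spurious edges between non-blue vertices of $V(H^*)$ can arise — invoking that $H$ is an induced subgraph of $\red(G)$ — is the one point the paper dispatches as ``immediate,'' so your write-up is a slightly more careful rendering of the identical argument.
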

\begin{figure}
    \centering
    \includegraphics{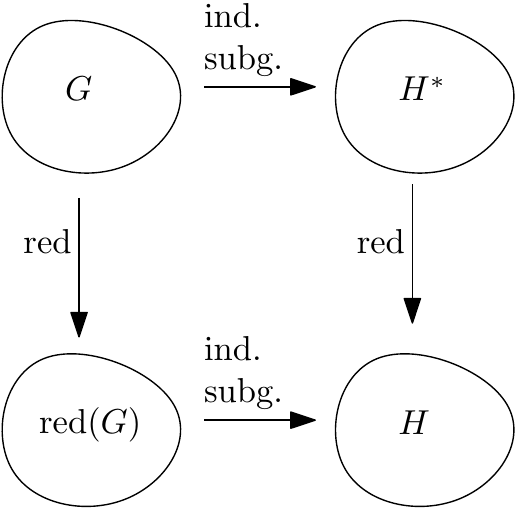}
    \caption{Building $H^*$ out of $H$ in the proof of Lemma~\ref{lem_lifting}. An "ind. subg." arrow means that the target is an induced subgraph of the source. A "red" means that the target is obtained from the source by applying the operation red.}
    \label{fig_lifting}
\end{figure}
\begin{proof}
See Figure~\ref{fig_lifting} for visual aid. Let $X=V(G)\setminus V(H)$. This set can be seen as a subset of $V(G)$. Let us build a set $Y\subset V(G)$ from $X$ in the following way:
\begin{itemize}
    \item a black vertex belongs to $Y$ iff it belongs to $X$,
    \item Notice that every blue vertex is part of a blue path that is incident to two black vertices. A blue vertex belongs to $Y$ iff at least one of these two black vertices belongs to X.
\end{itemize}
Now, let $H^*$ be the graph obtained from $G$ by deleting all vertices of $Y$. We claim that $H^*$ is a vertex-colored graph and that $\red(H^*)=H$. To show that $H^*$ is vertex colored, we need to show that all its blue vertices have degree $2$, or equivalently that any blue vertex of $G$ that is incident to a vertex of $Y$ is also in $Y$ (loosely speaking, if you delete a neighbor of a blue vertex, you also delete this vertex). This last condition is ensured by the construction of $Y$. Finally, $\red(H^*)=H$ is immediate because $V(H)\subset V(H^*)$ and $V(H^*)\setminus V(H)$ only contains blue vertices (again, here we see $V(H)$ as a subset of $V(G)$).

\end{proof}

We are finally ready to prove Proposition~\ref{prop_topminor_to_subgraph}.

\begin{proof}[Proof of Proposition~\ref{prop_topminor_to_subgraph}]
\begin{figure}
    \centering
    \includegraphics{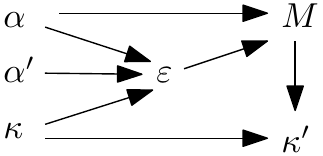}
    \caption{The causal graph of the variables we introduced in the proof of Proposition~\ref{prop_topminor_to_subgraph}.}
    \label{fig_causal}
\end{figure}

This proof might be complicated to follow,  Figure~\ref{fig_diagram} presents the relations between the different graphs involved. Also, we introduce a lot of variables, Figure~\ref{fig_causal} presents a causal graph of all the variables to make sure there is no circularity.
Let $G$ and $H$ be graphs satisfying the assumptions of the proposition.
Let $\eps>0$ be such that 

\begin{equation}\label{eq_def_eps}
\alpha\left(1-\frac{6\eps}{\kappa}\right)=\alpha'.
\end{equation}
By Lemma~\ref{lem_ordering_operations}, there must exist a vertex-coloured graph $G_1$ such that $G_1$ is a subgraph of $G$ (if we forget about the colouring), and such that $\red(G_1)=H$. \\

Now, by Proposition~\ref{prop_topminor_find_big_component}, there exists $M$ depending only on $\kappa$, $\alpha$ and $\eps$, and a subgraph $C_M$ of~$G_1$ such that $e(\red(C_M))\geq(1-\eps)e(H)$ and such that $C_M$ does not contain any blue path of length~$>M$.\\

Hence we have $\red(C_M)$ a subgraph of $H$, with $e(\red(C_M))\geq(1-\eps)e(H)$ and $H$ a $\kappa$-expander. Thus by Theorem~\ref{thm_big_subgraph_contains_expander} and \eqref{eq_def_eps}, there exists an induced subgraph $H'$ of $\red(C_M)$, such that $e(H')\geq \alpha' e(G)$, such that $H'$ is a $\frac \kappa 3$-expander.\\

Now, by Lemma~\ref{lem_lifting}, there exists a graph $H^*$ that is a subgraph of $C_M$ such that $\red(H^*)=H'$.
It is direct to see that $H^*$ does not contain any blue path of length $>M$. Therefore, by Lemma~\ref{lem_smoothing_kappa}, $H^*$ is a $\kappa'$-expander, where 
\[\kappa'=\frac{\kappa}{3(2M-1)}.\]

Furthermore, we have $e(H^*)\geq e(H')\geq \alpha' e(G)$, which concludes the proof.
\begin{figure}
    \centering
    \includegraphics{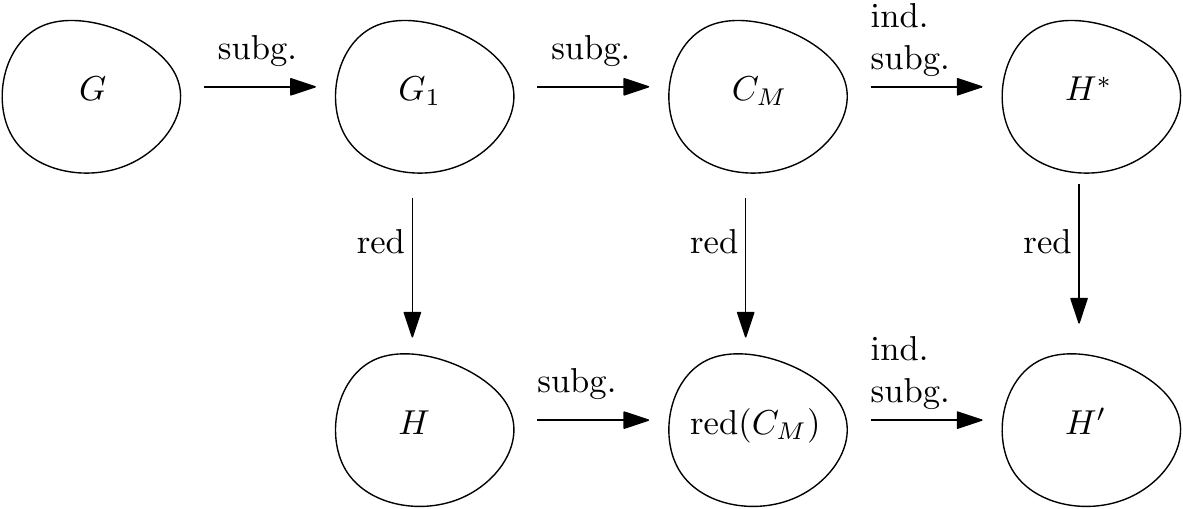}
    \caption{The relations between the different graphs considered in the proof of Proposition~\ref{prop_topminor_to_subgraph}. An "ind. subg." arrow means that the target is an induced subgraph of the source. A "subg." arrow means that the target is a subgraph of the source. A "red" means that the target is obtained from the source by applying the operation red.}
    \label{fig_diagram}
\end{figure}

\end{proof}

\needspace{6\baselineskip}
\section{From subgraph to induced subgraph}\label{sec_subgraph_induced}
Here we prove Proposition~\ref{prop_subgraph_to_induced}. The structure of the proof is similar to the proof of Proposition~\ref{prop_topminor_to_subgraph}. However, it is a bit more technical as the arguments are more intertwined than in the previous section.\\

The following lemma is similar to Lemma~\ref{lem_smoothing_kappa}, but for adding edges instead of paths.

\begin{lem}\label{lem_deleting_edge_kappa}
Let $H$ be a $\kappa$-expander, and let $G$ be obtained from $H$ adding some edges between its vertices, such that no vertex sees its degree multiplied by more than $M$. Then $G$ is a $\frac{\kappa}{M}$-expander.
\end{lem}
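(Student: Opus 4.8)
The statement is the additive analogue of Lemma~\ref{lem_smoothing_kappa}, so I would mirror that proof. Fix a subset $X\subset V(G)=V(H)$ with $G[X]$ connected; we want $h_G(X)\geq \kappa/M$. The key observation is that $X$ is also a vertex subset of $H$, and the hypothesis ``no vertex has its degree multiplied by more than $M$'' gives the two-sided comparison $\vol_H(S)\leq \vol_G(S)\leq M\,\vol_H(S)$ for every $S\subset V(H)$, in particular for $S=X$ and $S=\overline X$. Meanwhile edges are only added, never removed, so $e_G(X,\overline X)\geq e_H(X,\overline X)$.

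\textbf{Main steps.} First I would record the degree comparison above and note that since edges are only added, $h_H$ is still the relevant quantity to compare against. Second, WLOG assume $\vol_G(X)\leq \vol_G(\overline X)$, so $\min(\vol_G(X),\vol_G(\overline X))=\vol_G(X)\leq M\,\vol_H(X)$. I do \emph{not} get to assume $\vol_H(X)\leq\vol_H(\overline X)$ for free, but I don't need it: I simply bound $\min(\vol_G(X),\vol_G(\overline X))\leq M\,\vol_H(X)$ and also $\leq M\,\vol_H(\overline X)$ (using $\vol_G(\overline X)\leq M\vol_H(\overline X)$ on whichever side is the min via the trivial fact that the min of two things is at most each of them, combined with $\vol_G(X)\leq\vol_G(\overline X)$), hence $\min(\vol_G(X),\vol_G(\overline X))\leq M\min(\vol_H(X),\vol_H(\overline X))$. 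Third, combine: since $H$ is a $\kappa$-expander, $e_H(X,\overline X)\geq \kappa\min(\vol_H(X),\vol_H(\overline X))$, so
\[
h_G(X)=\frac{e_G(X,\overline X)}{\min(\vol_G(X),\vol_G(\overline X))}\geq \frac{e_H(X,\overline X)}{M\min(\vol_H(X),\vol_H(\overline X))}\geq \frac{\kappa}{M}.
\]
Note the edge case $\min(\vol_H(X),\vol_H(\overline X))=0$, i.e.\ $X=\emptyset$ or $\overline X=\emptyset$ (or a side has no edges touching it): then $h_G$ is either undefined/vacuous or the numerator already dominates, so the bound holds trivially; I would dispatch this in one line, or just note we only check nonempty $X$ with both volumes positive since otherwise there is nothing to prove.

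\textbf{Expected obstacle.} There is no real obstacle — unlike Lemma~\ref{lem_smoothing_kappa} there are no auxiliary black vertices to count, and unlike Lemma~\ref{lem_induced_vol} we are not changing the vertex set. The only point requiring a moment's care is that the minimum defining $h_G$ might be attained on the opposite side from the minimum defining $h_H$, which is why I phrase the volume comparison as $\min(\vol_G(X),\vol_G(\overline X))\leq M\min(\vol_H(X),\vol_H(\overline X))$ directly, rather than trying to match sides. That inequality follows because $\vol_G(Z)\leq M\vol_H(Z)$ for $Z\in\{X,\overline X\}$ and the minimum on the left is $\leq$ each $\vol_G(Z)$. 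This is genuinely the whole argument.
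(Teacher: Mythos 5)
Your proof is correct and matches the paper's argument: both use $\vol_G(X)\le M\vol_H(X)$ for all $X$, $e_G(X,\overline X)\ge e_H(X,\overline X)$, and the elementary fact that $a'\le ca$ and $b'\le cb$ imply $\min(a',b')\le c\min(a,b)$. The only difference is that you spell out the step about the minima, which the paper leaves implicit with ``therefore the lemma follows.''
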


\begin{proof}
We stress the fact that $V(G)=V(H)$. 
For all $X\subset V(G)$, we have immediately
\[\vol_G(X)\leq M\vol_H(X)\]
and
\[e_G(X,\overline X)\geq e_H(X,\overline X),\]

therefore the lemma follows.

\end{proof}

Recall an edge-coloured graph $G$ has a subset of its edges coloured blue, $\red(G)$ is obtained from~$G$ by deleting all its blue edges and for $v\in V(G)$ we denote by $d_{\red}(v)$ the degree of $v$ in $\red(G)$.

\begin{lem}\label{lem_few_bad_vertices}
For all $\eps,\alpha>0$, there exists $M$ such that the following holds for all edge-coloured graphs. If $G$ is  such that $e(\red(G))\geq \alpha e(G)$, let $V^*=\{v\in V(G) \mid \deg(v)\geq M d_\red(v)\}$. Then
\[\vol_{\red(G)}(V^*)\leq \eps e(\red(G)).\]
\end{lem}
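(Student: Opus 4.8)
The statement is a counting inequality, so the natural approach is to bound $\vol_{\red(G)}(V^*)$ by charging it against $e(\red(G))$ via the gap between the full degree and the reduced degree. The plan is to choose $M$ large enough (depending only on $\eps,\alpha$), and then exploit two facts: first, that every vertex $v\in V^*$ satisfies $\deg(v)\ge M d_\red(v)$, so $v$ is incident to many blue edges (at least $\deg(v)-d_\red(v)\ge (1-1/M)\deg(v)$ of them); and second, that the total number of blue edges is small, namely $e(G)-e(\red(G))\le (1-\alpha)e(G)\le \frac{1-\alpha}{\alpha}e(\red(G))$. Summing the incidences to blue edges over $v\in V^*$ and dividing gives the bound.

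\textbf{Key steps, in order.} First I would fix $M$ to be the smallest integer with $M > 2\cdot\frac{1-\alpha}{\alpha}\cdot\frac{1}{\eps}+1$ (the precise constant to be pinned down at the end). Next, for each $v\in V^*$, the number of blue edge-endpoints at $v$ is $\deg(v)-d_\red(v)$; from $\deg(v)\ge M d_\red(v)$ we get $d_\red(v)\le \deg(v)/M$, hence $\deg(v)-d_\red(v)\ge (1-\tfrac1M)\deg(v)\ge \tfrac12\deg(v)$ once $M\ge 2$. Summing over $v\in V^*$, the total number of blue edge-endpoints incident to $V^*$ is at least $\tfrac12\vol_G(V^*)\ge \tfrac12\vol_{\red(G)}(V^*)$, since $\vol_{\red(G)}(V^*)\le \vol_G(V^*)$ (deleting edges only decreases degrees). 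On the other hand, each blue edge contributes at most $2$ to this sum, so the total number of blue edge-endpoints incident to $V^*$ is at most $2\bigl(e(G)-e(\red(G))\bigr)$. Combining, $\vol_{\red(G)}(V^*)\le 4\bigl(e(G)-e(\red(G))\bigr)$. Finally, using $e(\red(G))\ge\alpha e(G)$ we have $e(G)-e(\red(G))\le (1-\alpha)e(G)\le \frac{1-\alpha}{\alpha}e(\red(G))$, so $\vol_{\red(G)}(V^*)\le 4\cdot\frac{1-\alpha}{\alpha}e(\red(G))$; choosing $M$ large enough to replace the crude bound $d_\red(v)-\deg(v)\ge \tfrac12\deg(v)$ by $\ge (1-\tfrac1M)\deg(v)$ and tracking the constant shows the right choice is $M > 2\cdot\frac{1-\alpha}{\alpha\eps}$ roughly, giving $\vol_{\red(G)}(V^*)\le \eps e(\red(G))$.

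\textbf{Main obstacle.} There is no serious obstacle here; the only thing to be careful about is the direction of the inequality $\vol_{\red(G)}(V^*)\le \vol_G(V^*)$ (we want to lower-bound blue incidences by the \emph{reduced} volume, which is fine since reduced degree $\le$ full degree), and the exact dependence of $M$ on $\eps$ and $\alpha$ — one must make sure the factor $(1-1/M)$ coming from $\deg(v)-d_\red(v)\ge(1-1/M)\deg(v)$ together with the $1/2$ from each blue edge being counted twice and the $\frac{1-\alpha}{\alpha}$ from the density assumption all combine to beat $\eps$. Concretely I expect the clean choice to be any $M$ with $M > \frac{2(1-\alpha)}{\alpha\eps}+1$, and the verification is a one-line arithmetic at the end.
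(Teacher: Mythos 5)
There is a genuine gap: your argument does not actually make use of $M$ being large, so the final bound cannot be driven below $\eps\, e(\red(G))$ for small $\alpha$. Trace through your chain. From $\deg(v)\ge M d_\red(v)$ you deduce that the number of blue endpoints at $v$ is at least $(1-\tfrac1M)\deg(v)$, so summing and comparing to the total count of blue endpoints $2\bigl(e(G)-e(\red(G))\bigr)$ gives
\[\bigl(1-\tfrac1M\bigr)\vol_G(V^*)\ \le\ 2\bigl(e(G)-e(\red(G))\bigr)\ \le\ \tfrac{2(1-\alpha)}{\alpha}\,e(\red(G)).\]
You then pass to $\vol_{\red(G)}(V^*)$ via the crude $\vol_{\red(G)}(V^*)\le\vol_G(V^*)$, so you end up with $\vol_{\red(G)}(V^*)\le \tfrac{2}{1-1/M}\cdot\tfrac{1-\alpha}{\alpha}\,e(\red(G))$. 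As $M\to\infty$ this only tends to $\tfrac{2(1-\alpha)}{\alpha}\,e(\red(G))$, which for $\alpha<2/(2+\eps)$ stays strictly above $\eps\, e(\red(G))$. In other words, the factor $(1-1/M)$ is bounded between $1/2$ and $1$ and never beats an arbitrarily small $\eps$; increasing $M$ buys you essentially nothing. The mistake is strategic: the content of the hypothesis $\deg(v)\ge M d_\red(v)$ is not ``$v$ has many blue edges,'' which only gives a constant-factor gain, but ``$v$ has a \emph{tiny red degree} relative to its total degree,'' which gives the $1/M$ gain you need.

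The fix is exactly to place the $1/M$ factor at the step where you compare the two volumes. For $v\in V^*$ one has $d_\red(v)\le\tfrac1M\deg(v)$, so summing over $V^*$ gives $\vol_{\red(G)}(V^*)\le\tfrac1M\vol_G(V^*)$. Then $\vol_G(V^*)\le\vol_G(V(G))=2e(G)\le \tfrac{2}{\alpha}e(\red(G))$, giving $\vol_{\red(G)}(V^*)\le \tfrac{2}{M\alpha}e(\red(G))$, and one takes $M\ge \tfrac{2}{\alpha\eps}$. This is a three-line computation and does not need any accounting of blue edges at all; the detour through blue incidences is where the argument derailed.
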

\begin{proof}
This is immediate because
\[\vol_{\red(G)}(V^*)\leq \frac{1}{M}\vol_G(V^*)\leq \frac{1}{M}e(G)\leq \frac{1}{M\alpha}e(\red(G)).\]

\end{proof}

The following proposition will play the same role as Theorem~\ref{thm_big_subgraph_contains_expander} and Proposition~\ref{prop_topminor_find_big_component} in Section~\ref{sec_topminor_subgraph}. Unfortunately, this time we cannot make it into two separate arguments, which makes things a little more technical.

\begin{prop}\label{prop_algo_induced}
For all $\kappa,\eps,\alpha>0$, there exists $M$ such that the following holds for all edge-coloured graphs. If $G$ is  such that $\red(G)$ is a $\kappa$-expander and $e(\red(G))\geq \alpha e(G)$, then there exists an induced subgraph $G^*$ of $G$ such that $\red(G^*)$ is a $\frac \kappa 3$-expander, and for every vertex $v$ of~$G^*$, $\deg(v)\leq 3M d_\red(v)$, and $e(G^*)\geq \left(1-\eps\left (1+\frac{3}{\kappa}\right)\right)e(\red(G)).$
\end{prop}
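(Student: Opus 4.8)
The plan is to mimic the process used in the proof of Theorem~\ref{thm_big_subgraph_contains_expander}, but now in the edge-coloured setting, and to kill two birds at once: remove both the vertices that create low-expansion cuts in $\red(G)$ \emph{and} the ``high-degree'' vertices (those $v$ with $\deg(v) \geq 3M\, d_\red(v)$) so that the final graph simultaneously satisfies the expansion bound and the bounded-degree-ratio condition. First I would fix $M$ large using Lemma~\ref{lem_few_bad_vertices} applied with some auxiliary parameter (so that, writing $V^* = \{v : \deg(v) \geq M\,d_\red(v)\}$, we have $\vol_{\red(G)}(V^*) \leq \eps\, e(\red(G))$), and I would immediately delete $V^*$ from $G$ to obtain $G_0$; by Lemma~\ref{lem_induced_vol}-type reasoning (or a direct volume estimate) $\red(G_0)$ is still close in size to $\red(G)$, losing at most $\eps\, e(\red(G))$ edges, and is still a decent expander — here I would want a lemma saying that deleting a low-volume set from a $\kappa$-expander leaves something like a $\kappa/2$-expander on what remains, or more simply track this inside the iterative process itself.

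Next I would run the bad-set removal process on $G_0$ exactly as in Theorem~\ref{thm_big_subgraph_contains_expander}: while the current induced subgraph $\red(G_i)$ is not a $\frac{\kappa}{3}$-expander, pick a witnessing set $X_i$ of low edge-expansion (with $\vol \leq \vol$ of the complement), delete it, and repeat. The termination is immediate. The crucial quantitative input is the analogue of estimate~\eqref{eq_estimation_final_thm2}: the total volume $\vol_{\red(G)}(\bigcup X_i)$ of all removed sets is bounded by a constant times the ``error'' between $G$ and $\red(G)$ restricted to the relevant region. Here the role played by $\dif(X_i)$ and $\out(X_i)$ in the proof of Theorem~\ref{thm_big_subgraph_contains_expander} is played by the blue edges incident to $X_i$: for each $X_i$, the cut $e_{\red(G)}(X_i, \overline{X_i})$ in $\red(G)$ differs from the ``observed'' downward part in $\red(G_i)$ by blue edges and by edges going to previously-removed sets, and since we already deleted $V^*$, every vertex currently present has $\deg(v) < 3M\, d_\red(v)$ — wait, that is not yet guaranteed for the \emph{output}, so I would instead argue that the blue-edge budget is controlled by $e(G) - e(\red(G)) \leq (\tfrac1\alpha - 1) e(\red(G))$ and feed that through. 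Summing the per-step inequalities as in~\eqref{eq_A}--\eqref{eq_B} telescopes the downward terms and yields $\vol_{\red(G)}(\bigcup X_i) \leq \frac{3}{\kappa}\cdot(\text{blue-edge error}) \leq \frac{3\eps}{\kappa} e(\red(G))$ after choosing $M$ appropriately.

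Finally I would set $G^* = G_j$ (the terminal graph), which is an induced subgraph of $G$ by construction. Then $\red(G^*)$ is a $\frac{\kappa}{3}$-expander by the stopping rule. Every vertex of $G^*$ survived the deletion of $V^*$, so $\deg(v) < M\, d_\red(v)$ where degrees are taken in $G$; since $G^*$ is an induced subgraph the reduced degree can only have dropped and the full degree can only have dropped, and one checks $\deg_{G^*}(v) \leq 3M\, d_{\red(G^*)}(v)$ — the factor $3$ giving slack for the fact that removing vertices can decrease $d_\red(v)$ (this is where I would be careful: deleting a neighbour across a blue edge decreases $d_\red$; deleting a neighbour across a red edge decreases both; I expect the honest bookkeeping here, ensuring the ratio stays $\leq 3M$, to be the main obstacle, and it may force choosing $V^*$ with threshold $M$ but the process only deleting further vertices, so that the ratio degrades by at most a bounded factor — or more cleanly, re-deleting a fresh $V^*$-type set at the end). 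The size bound follows by adding the two losses: $e(\red(G^*)) \geq e(\red(G)) - \vol_{\red(G)}(V^*) - \vol_{\red(G)}(\bigcup X_i) \geq (1 - \eps - \tfrac{3\eps}{\kappa}) e(\red(G)) = (1 - \eps(1 + \tfrac3\kappa)) e(\red(G))$, and since $e(G^*) \geq e(\red(G^*))$ this gives the claimed inequality. The main obstacle, as noted, is marshalling the blue-edge/degree bookkeeping so that a single choice of $M$ simultaneously controls the expansion loss, the total removed volume, and the degree-ratio of the output; everything else is a direct transcription of the argument for Theorem~\ref{thm_big_subgraph_contains_expander}.
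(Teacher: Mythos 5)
You have correctly identified the main obstacle but have not resolved it, and this is a genuine gap rather than just bookkeeping. Your plan deletes $V^*$ once at the start and then runs only the low-expansion removal process; at the end you claim that every surviving vertex satisfies $\deg_{G^*}(v)\leq 3M\,d_{\red(G^*)}(v)$, with the factor $3$ ``giving slack.'' This is not true in general: if during the expansion-removal process many \emph{red} neighbours of a vertex $v$ are deleted while its blue neighbours survive, then $d_{\red}(v)$ can drop to $1$ (or to $0$ for that matter) while $\deg(v)$ stays large, so the ratio can become arbitrarily bad — no fixed constant factor repairs this. Your suggested fallback, ``re-deleting a fresh $V^*$-type set at the end,'' reopens the expansion problem and then you would need to iterate again, with no a priori bound on how many passes are needed nor on the total volume removed.

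The paper resolves this by interleaving the two kinds of removal inside a single process, which is precisely the idea your plan is missing. After setting $X_0 := V^*$, at each subsequent step $i$ the algorithm removes a set $X_i$ that violates \emph{either} the degree-ratio condition (Case~1: $\vol_{G_i}(X)\geq 3M\vol_{H_i}(X)$) \emph{or} the expansion condition (Case~2), and only terminates when neither kind of bad set exists — so both output conditions hold automatically on the output $G_j$. The point that makes this work quantitatively is that in \emph{both} cases one can prove the same two estimates, $\upp(X_i)\geq 2\down(X_i)$ and $\upp(X_i)\geq\frac{2\kappa}{3}\vol_H(X_i)$: in Case~1 this follows from comparing the degree ratio bound $\vol_{G_i}(X_i)\geq 3M\vol_{H_i}(X_i)$ in the current graph with the bound $\vol_G(X_i)\leq M\vol_H(X_i)$ in the original graph (which holds because $X_0=V^*$ was removed first), forcing $\upp(X_i)\geq\frac{2}{3}\vol_H(X_i)$; in Case~2 it follows from $\kappa$-expansion of $H$ together with Lemma~\ref{lem_induced_vol} as in your sketch. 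With these unified inequalities, the sum $S_j=\sum_{0<i\leq j-1}\upp(X_i)$ satisfies $S_j\leq \down(X_0)+\frac{1}{2}S_{j-1}$, giving $S_j\leq 2\vol_H(X_0)$ and hence $\vol_H(Y_{j-1})\leq(1+\frac{3}{\kappa})\vol_H(X_0)\leq(1+\frac{3}{\kappa})\eps\,e(H)$. So the fix you need is not better bookkeeping on the ratio, but a redesigned process that removes degree-violating sets and expansion-violating sets on the same footing and proves a common per-step inequality for both.
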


\begin{proof}
We will construct an algorithm which takes as input a graph $G$ satisfying the assumptions of the proposition, and outputs a graph $G^*$ with the required properties.

\paragraph{Setup}
Some notation before we start: we will construct a finite sequence $X_0, X_1,…$ of disjoint subsets of $V(G)$. We will write $Y_j=\bigcup_{0\leq i\leq j}X_i$, $V_j=V(G)\setminus Y_{j-1}$ and $G_j=G[V_j]$. We will also write $H=\red(G)$ and $H_j=\red(G_j)$.\\

Finally, let $\upp(X_i)=e_H(X_i,Y_{i-1})$ and $\down(X_i)=e_H(X_i,V_{i+1})=e_{H_i}(X_i,V_{i+1})$ (the second equality is a consequence of the definitions). See Figure~\ref{fig_process_induced} for an illustration of the process.

\begin{figure}
\center
\includegraphics[scale=0.6]{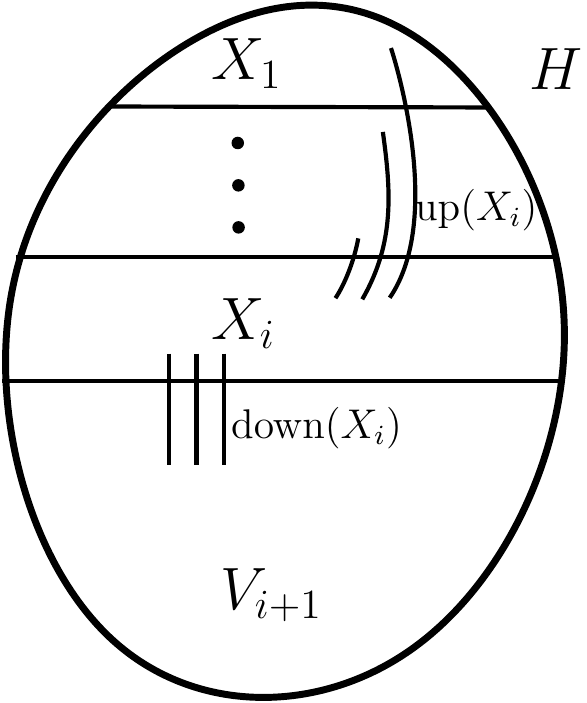}
\caption{Illustration of the process in the proof of Proposition~\ref{prop_algo_induced}.}\label{fig_process_induced}
\end{figure}

\paragraph{The algorithm}
The algorithm works this way:

At step $0$, let $V^*$ and $M$ be given by Lemma~\ref{lem_few_bad_vertices}. Set $X_0:=V^*$.

At step $i>0$:

\textbf{Case 1:} \textit{If} there exists a set $X\subset V_{i}$ such that 
\[\vol_{G_i}(X)\geq 3M\vol_{H_i}(X), \]
set $X_i:=X$.

\textbf{Case 2:} \textit{Else, if} there exists a set $X\subset V_{i}$ such that $\vol_{H_{i}}(X)\leq \vol_{H_{i}}(\overline X)$ and
\[e_{H_i}(X,\overline{X})\leq \frac{\kappa}{3}\vol_{H_{i}}(X),\]
set $X_i:=X$.

\textit{Else}, return $G_{i}$.

Note that this algorithm terminates because $G_{i}$ gets strictly smaller at each step.

\paragraph{Estimating the size of the output graph:} We let the algorithm run until it stops and returns some $G_j$. Let us analyse each step of the algorithm. Take a step $i>0$.

\textbf{If we are in case 1}, we have
\[\vol_{G_i}(X_i)\geq 3M\vol_{H_i}(X_i)\] but
\[\vol_G(X_i)\leq M \vol_H(X_i)\] by definition of $X_0$.

Hence, knowing that $\vol_{H}(X_i)=\upp(X_i)+\vol_{H_i}(X_i)$, and $\vol_{G_i}(X_i)\leq \vol_G(X_i)$ we obtain the following inequality
\begin{equation*}
\upp(X_i)\geq \frac{2}{3}\vol_H(X_i).
\end{equation*}
Using the fact $\vol_{H_i}(X_i)\geq \down(X_i)$, we can obtain the following inequality
\begin{equation*}
    \upp(X_i)\geq 2\down(X_i).
\end{equation*}

\textbf{If we are in case 2}, we have
\[\down(X_i)\leq \frac{\kappa}{3}\vol_{H_i}(X_i)\] but, $H$ is a $\kappa$-expander, and by Lemma~\ref{lem_induced_vol} $\vol_{H_i}(X_i)\leq \vol_{H_i}(\overline X_i)$ implies $\vol_{H}(X_i)\leq \vol_{H}(\overline X_i)$, hence
\[\upp(X_i)+\down (X_i)\geq \kappa \vol_H(X_i).\]
Since $\vol_{H_i}(X_i)\leq \vol_H(X_i)$, this implies immediately 
\begin{equation*}
\upp(X_i)\geq \frac{2\kappa}{3}\vol_H(X_i)
\end{equation*}
and
\begin{equation*}
    \upp(X_i)\geq 2\down(X_i).
\end{equation*}

\textbf{In any case}, we have for all $i>0$
\begin{equation}\label{eq_up_down}
    \upp(X_i)\geq 2\down(X_i),
\end{equation}
and since $\kappa\leq 1$,
\begin{equation}\label{eq_up_vol}
\upp(X_i)\geq \frac{2\kappa}{3}\vol_H(X_i).
\end{equation}

Finally, let \[S_j=\sum_{0<i\leq j-1} \upp(X_i).\]
By definition of $\upp$ and $\down$, we have
\[S_j\leq \sum_{0\leq i \leq j-2} \down(X_i),\]
which, by \eqref{eq_up_down} implies 
\[S_j\leq \down(X_0)+\frac{1}{2}S_{j-1}.\]
Using the trivial inequalities $S_{j-1}\leq S_j$ and $\down(X_0)\leq\vol_H(X_0)$, we obtain
\begin{equation}\label{eq_Sj_X0}
   S_j\leq 2  \vol_H(X_0).
\end{equation}
On the other hand, by summing \eqref{eq_up_vol} over $0<i\leq j-1$, we have for $j\geq 2$
\begin{equation}\label{eq_vol_Y_X}
    S_j\geq \frac{2\kappa}{3}\Big(\vol_H(Y_{j-1})-\vol_H(X_0)\Big).
\end{equation}
Combining~\eqref{eq_Sj_X0} and~\eqref{eq_vol_Y_X}, one obtains for $j\geq 2$
\begin{equation}\label{eq_vol_final}
    \vol_H(Y_{j-1})\leq \left(1+\frac{3}{\kappa}\right)\vol_H(X_0).
\end{equation}

Notice that if $j=1$ then $Y_{j-1}=X_0$ and so \eqref{eq_vol_final} holds for all $j$. By Lemma~\ref{lem_few_bad_vertices}, we have $\vol_H(X_0)\leq \eps e(H)$. 
Now, since $e(G_j)\geq e(H_j)\geq e(H)-\vol_H(Y_{j-1})$, by~\eqref{eq_vol_final} we have
\[e(G_j)\geq \left(1-\eps\left (1+\frac{3}{\kappa}\right)\right)e(H).\]
The condition of case 1 ensures that for all $v$ of $G_j$, $\deg(v)\leq 3M d_\red(v)$, and the condition of case 2 ensures that $\red(G_j)$ is a $\frac \kappa 3$-expander. We set $G^*=G_j$, this finishes the proof.

\end{proof}

We can now turn to the proof of Proposition~\ref{prop_subgraph_to_induced}
\begin{proof}[Proof of Proposition~\ref{prop_subgraph_to_induced}]
Let $G$ and $H$ be graphs satisfying the assumptions of the proposition. Let $\eps>0$ be such that 

\begin{equation*}
\alpha\left(1-\eps\left(1+\frac{3}{\kappa} \right)\right)=\alpha'.
\end{equation*}
By Lemma~\ref{lem_ordering_operations}, there must exist an edge-coloured graph $G_1$ such that $G_1$ is an induced subgraph of $G$ (if we forget about the colouring), and such that $\red(G_1)=H$.\\

Now, by Proposition~\ref{prop_algo_induced}, there exists $M$ depending only on $\kappa$, $\alpha$ and $\eps$, and an induced subgraph~$H^*$ of $G_1$ such that $e(H^*)\geq\left(1-\eps\left(1+\frac{3}{\kappa} \right)\right)e(H)\geq \alpha' e(G)$ and such that for every $v \in V(H^*)$, we have 
\begin{equation}\label{eq_cond_H*}
  \deg(v)\leq 3M d_\red(v),  
\end{equation}
and such that $\red(H^*)$ is a $\frac \kappa 3$-expander.\\

By~\eqref{eq_cond_H*} and Lemma~\ref{lem_deleting_edge_kappa}, $H^*$ is a $\kappa'$-expander, with $\kappa'=\frac{\kappa}{9M}$, which concludes the proof.

\end{proof}

\needspace{4\baselineskip}
\bibliographystyle{abbrv}
\bibliography{bibli}

\end{document}